\newtheorem{theorem}{Theorem}
\newtheorem{lemma}{Lemma}
\newtheorem{remark}{Remark}
\newtheorem{assumption}{A}
\DeclareMathOperator{\sign}{sign}
\newcommand{\remove}[1]{}
\newcommand{\ALG}{$\mathcal{A}$ }
\newcommand{\ALGone}{A-OBD }
\newcommand{\hs}[1]{\ifthenelse{\boolean{showcomments}} 
	{\textcolor{magenta}{(HS says: #1)}} {} }
\newcommand{\pc}[1]{ \ifthenelse{\boolean{showcomments}}
	{\textcolor{red}{(PC says: #1)}} {} }
\newcommand{\rz}[1]{\ifthenelse{\boolean{showcomments}} 
	{\textcolor{blue}{(RZ says: #1)}} {} }
\newenvironment{customproof}[1][]
{\par\noindent\textit{Proof of #1. }\ignorespaces}
{\qed\par\vspace{1em}}
\def\bb0{{\mathbb{0}}}
\def\bb{{\mathbf{b}}}
\def\b0{{\mathbf{0}}}
\def\opt{\mathsf{OPT}}
\def\b1{{\mathbf{1}}}
\def\bbR{{\mathbb{R}}}
\def\cA{\mathcal{A}}
\def\cO{\mathcal{O}}
\def\sfd{{\mathsf{d}}}
\def\sf0{{\mathsf{0}}}
\begin{document}
	\title{Online Convex Optimization with Switching Cost with Only One Single  Gradient Evaluation} 
    \author{
\IEEEauthorblockN{Harsh Shah\IEEEauthorrefmark{1},
Purna Chandrasekhar\IEEEauthorrefmark{2},
Rahul Vaze\IEEEauthorrefmark{3}}
\IEEEauthorblockA{\IEEEauthorrefmark{1}Indian Institute of Technology Bombay (IITB)\\
Email: 210100063@iitb.ac.in}
\IEEEauthorblockA{\IEEEauthorrefmark{2}Indian Institute of Technology Delhi (IITD)\\
Email: mt6210270@iitd.ac.in}
\IEEEauthorblockA{\IEEEauthorrefmark{3}Tata Institute of Fundamental Research (TIFR)\\
Email: rahul.vaze@gmail.com}
}

	\maketitle
	
	\begin{abstract}
		Online convex optimization with switching cost is considered  under the frugal information setting where at time $t$, before action $x_t$ is taken, 
		only a single function evaluation and a single gradient is available at the previously chosen action $x_{t-1}$ for either the current cost function $f_t$ or the most recent cost function $f_{t-1}$. When the switching cost is linear, online algorithms with optimal order-wise competitive ratios are derived for the frugal setting. When the gradient information is noisy, an online algorithm whose competitive ratio grows quadratically with the noise magnitude is derived.
	\end{abstract}

	\section{Introduction}
Online convex optimization with switching cost (OCO-S) problem was introduced in \cite{lin2012online}, 
where for each discrete time $t$, a convex cost function is $f_t: \bbR^{\sfd} \rightarrow \bbR^+$. The cost of choosing action $x_t$ at time $t$ is the sum of the cost $f_t(x_t)$, and the switching cost
$c(x_{t-1}, x_t)$. The goal is to choose $x_t, 1\le t\le T$ that solves
\begin{equation}\label{eq:total-cost}
\min_{x(t)} \sum_{t=1}^T f_t(x_t) + c(x_{t-1}, x_t).
\end{equation}

Problem \eqref{eq:total-cost} was motivated by capacity provisioning problem  \cite{lin2012online}
where demand $d(t)$ at time $t$ is revealed causally over time, and to serve demand $d(t)$, $s(t)$ number of servers are employed.  Changing $s(t)$ over time to dynamically serve $d(t)$ keeps the QoS cost $f(d(t), s(t))$ small but incurs a large switching cost. 
%With this motivation, Problem \eqref{eq:total-cost} was formulated.

\subsection{Prior Work}
For the OCO-S \eqref{eq:total-cost} with quadratic switching cost, i.e., $c(x_{t},
x_{t - 1}) = ||x_{t} - x_{t - 1}|| ^ 2$, if the functions \(f_{t}\)'s
are just convex, then the competitive ratio of any
deterministic online algorithm is unbounded
\cite{goel2019beyond}. Thus, starting with \cite{chen2018}, to solve
\eqref{eq:total-cost}, $f_t$'s were assumed to be strongly convex. 

With
\(\mu\)-strongly convex $f_t$'s, online balanced descent (OBD)
algorithm that balances the suboptimality and switching cost at each
time, has a competitive ratio of at most \(3 +
\mathcal{O}(\frac{1}{\mu})\) \cite{goel2019online}. Surprisingly, \cite{ZhangSOCO2021XstarAlgorithm} showed that the same guarantee can be 
obtained by a simple algorithm that chooses $x_t$ as the local optimizer of $f_t$ disregarding the switching cost. This result was
improved in \cite{goel2019beyond} using a modification of OBD to get a
competitive ratio $\mathcal{O}(\frac{1}{\sqrt{\mu}})$ as \({\mu \to
  0^{+}}\), which is also shown to be the best possible (order-wise)
competitive ratio for all possible deterministic online algorithms
\cite{goel2019beyond}.

%Similar results have been obtained 
%when $f_t$'s are locally polyhedral in \cite{chen2018}. 
%Very recently, \cite{bhuyan2023best} studied OCO-S under some assumptions on the sequence of minimizers of $f_t$'s and provided an algorithm that works for both the worst case as well as the stochastic input. The OCO-S has also been studied in the information setting of OCO recently \cite{senapativaze2023}, where action $x_t$ is chosen before $f_t$ is revealed.

OCO-S with linear switching cost, where $c(x_{t}, x_{t - 1})= |x_{t}
- x_{t - 1}|$ has also been widely considered. For this case, when $\sfd=1$, an
optimal offline algorithm was derived in \cite{lin2012online}, while
online algorithms have been explored in \cite{lin2012online,
  lin2012dynamic, andrew2013tale, bansal20152,
  DBLP:journals/corr/abs-1807-05112,chen2018, argue2020dimension, chen2018smoothed} when
$f_t$'s are just convex.  For $\sfd=1$, the best known online algorithm
has the competitive ratio of $2$ which matches the lower bound
\cite{DBLP:journals/corr/abs-1807-05112}. 
%It is important to notice
%that unlike the quadratic switching cost case, when the switching cost
%is linear, the $f_t$'s are not required to be strongly convex to
%get meaningful competitive ratios.  
For general $\sfd$, when \(f_{t}\)'s
are convex, the competitive ratio of any algorithm has a lower bound
\(\Omega(\sqrt{\sfd})\) \cite{chen2018}. 
 \cite{argue2020dimension} proposed a
constrained online balanced descent (COBD) algorithm with a competitive
ratio \(\cO(\sqrt{\kappa})\) where \(\kappa := L /
\mu\) as the condition number. Recently, \cite{soco-constraints} considered the OCO-S with linear switching cost under some long-term constraints.

Almost all the prior work except \cite{senapativaze2023} considered the OCO-S problem under the full-information setting
where the action $x_t$ is chosen after function $f_t$ is  revealed completely.
The full information setting is quite imposing for many applications 
and difficult to realize in practice. In \cite{senapativaze2023}, when $f_t$'s are strongly convex, the limited information setting was considered, where at time $t$, multiple stale gradients $\nabla f_{t-1}(x_1), \dots, \nabla f_{t-1}(x_n)$ are available, where $n$ is a function of the strong convexity parameter and Lipschitz constant.

In this paper, we study the  OCO-S problem for $\sfd=1$ when functions $f_t$'s are convex under the frugal information setting, where at time $t$, before action $x_t$ is chosen, either $\{f_{t}(x_{t-1}), \nabla f_{t}(x_{t-1})\}$ (called {\bf fresh}) or $\{f_{t-1}(x_{t-1}), \nabla f_{t-1}(x_{t-1})\}$ (called {\bf stale}) is available. Thus, information available is only a single function evaluation and a single gradient at the previously chosen action $x_{t-1}$ for either $f_t$ or $f_{t-1}$.
We also consider the noisy gradient setting. We limit ourselves to the linear switching cost, i.e.,  $c(x_{t}, x_{t - 1})= |x_{t}
- x_{t - 1}|$.

In this paper, we consider the worst-case input and the objective is to design online (that have access to only causal information) algorithms for solving Problem \eqref{eq:total-cost} with small
competitive ratios that is defined as follows. 
The competitive ratio for an online algorithm $\cA$ is
$\mu_\cA = \max_{\sigma}\frac{C_\cA(\sigma)}{C_\opt(\sigma)},$
where $\sigma$ is the input, $C_\cA$  is the cost  \eqref{eq:total-cost} of an online algorithm $\cA$, while \(C_\opt\) denotes the cost
of the optimal offline algorithm $\opt$, that knows the input $\sigma$
ahead of time, non-causally.

\subsection{Our Contributions with linear switching cost }
We propose a single online algorithm (A-OBD) for all the frugal settings, where only the input parameter changes depending on the particular setting.
\begin{itemize}
\item For the fresh setting, we show that the $\mu_{\text{A-OBD}}= O(M)$, where $M$ is the common smoothness parameter of functions $f_t$'s. We also show that the competitive ratio of any online algorithm is $\Omega(M)$.
\item For the stale setting, we show that the $\mu_{\text{A-OBD}}= \Theta (M+L)$, where $L$ is the common Lipschitz constant for functions $f_t$'s. We also show that the competitive ratio of any online algorithm is $\Omega(\max\{M, L\})$.
\item With noisy gradients, we show that in the fresh/stale setting, $\mu_{\text{A-OBD}}\le O(M+L+\alpha^2)$, where $\alpha$ is maximum noise error defined ahead. 
%The stale counterpart of this result has an additional additive $L$ term in the competitive ratio bound.
\end{itemize}

	\section{System Model}
%	We refer to Problem \ref{eq:total-cost} as the \textbf{OCO with Linear Switching Cost (OCO-LS) Problem using only Gradient information:} Let the cost function at time $t=1,2,\ldots,T$, be a non-negative convex function $f_t:[0,1]\rightarrow\mathcal{R^+}$. The objective is to choose action $x_t$ at time $t$ so as to minimize the cost 
%	%	Objective of an online algorithm \ALG is to choose an action $x_t \in [0,1]$ so as to minimize the total cost.\\
%	\begin{equation}
%		C_{\mathcal{A}}=\sum_{t=1}^{T}f_t(x_t)+\sum_{t=1}^{T}c(x_t,x_{t-1})
%		\label{eq:total-cost}
%	\end{equation}
%	where $c(x_t,x_{t-1})$ represents the cost incurred in changing decisions across time steps. 
%	
We consider Problem \eqref{eq:total-cost} with $\sfd=1$.
	Typically, $x_t$ is chosen to minimize \eqref{eq:total-cost} with the full information about $f_t$ which can be expensive to obtain. To obviate this, in this paper, we consider frugal information settings. For example, at time $t,$ either $\{f_t(x_{t-1}), \nabla f_t(x_{t-1})\}$ or  $\{f_{t-1}(x_{t-1}), \nabla f_{t-1}(x_{t-1})\}$ or an approximate estimate of $\nabla f_{t-1}(x_{t-1})$ is available. Under these frugal information settings, we consider the design of online algorithms that minimize the cost \eqref{eq:total-cost} using only causal information. For the rest of this paper, we consider \eqref{eq:total-cost} with linear switching cost, i.e., $c(x_t,x_{t-1})=|x_t-x_{t-1}|,$.

	%	The total cost $C_{\mathcal{A}}$ comprises of the hitting cost $f_t(x_t)$ and the switching cost $c(x_t,x_{t-1}).$ We consider linear switching cost in this paper, i.e. $c(x_t,x_{t-1})=|x_t-x_{t-1}|.$ The goal for \ALG is to minimize the competitive ratio $\mu_{\mathcal{A}}$ given by
	%The metric we consider to evaluate the performance of any online algorithm \ALG is the competitive ratio $\mu$ defined as follows \vspace{-0.1in}
%	\begin{equation}
%		\mu_{\mathcal{A}} := \max_{f_t} \frac{C_{\mathcal{A}}}{C_{OPT}},
%		\label{eq:CR}
%	\end{equation}	
%	where $C_{OPT}$ denotes the cost of the optimal algorithm (OPT) that knows all the $f_t$'s ahead of time.

	The precise definition of causal frugal information settings that we consider is as follows
	\begin{enumerate}[label=(\alph*)]
		\item \textbf{fresh gradient setting:} At time $t,$ to choose $x_t,$ \ALG can use $\left\{ f_i(x_{i-1}), \nabla f_i(x_{i-1}) \mid i = 1, 2, \ldots, t \right\}.$ \label{setting:fresh-grad}
		\item \textbf{stale gradient setting:}  At time $t,$ to choose $x_t,$ \ALG can use  $\left\{ f_{i-1}(x_{i-1}), \nabla f_{i-1}(x_{i-1}) \mid i = 2, \ldots, t \right\}.$ \label{setting:stale-grad}
		\item \textbf{noisy gradient setting:} Let $\nabla f^{\text{rec}}_t$ be the approximate estimate of $\nabla f_t(x_{t-1})$ with error at most $\alpha$ i.e. $|\nabla f^{\text{rec}}_t-\nabla f_t(x_{t-1})| \leq \alpha.$
		At time $t,$ to choose $x_t,$ \ALG can use $\left\{ f_i(x_{i-1}), \nabla f^{\text{rec}}_i \mid i = 1, 2, \ldots, \tau \right\}.$ We consider this noisy setting for both  fresh ($\tau=t$) and stale ($\tau=t-1$) scenarios.
		\label{setting:noisy-grad}
	\end{enumerate}
	\section{Algorithm}
	At time $t$, let $t_{\max}$ be the latest time $t'\le t$ for which information about $f_{t'}(x_{t-1})$ and $\nabla f_{t'}(x_{t-1})$ is available. Thus, in the fresh setting $t_{\max}=t$, while for the stale setting $t_{\max}=t-1$.
	%Let $\hat{f}_t$ be the available function at the time step $t$ before taking action $x_t.$ For the setting \ref{setting:fresh-grad}, $\hat{f}_t$ corresponds to $f_t$ itself, while for the setting \ref{setting:stale-grad}, $\hat{f}_t$ corresponds to $f_{t-1}.$ 
%	Define $f^{avl}_t:={f}_{t_{\max}}(.)$ and $\nabla^{avl}_t:=\nabla {f}_{t_{\max}}(.).$ In the noisy gradient setting, $f^{avl}_t$ remains same as above but $\nabla^{avl}_t$ is equal to $\nabla {f}^{\text{rec}}_{t_{\max}}(.)$ due to error in the received gradient information.
	Define $f^{avl}_t:={f}_{t_{\max}}(x_{t-1})$ and $\nabla^{avl}_t:=\nabla {f}_{t_{\max}}(x_{t-1}).$ In the noisy gradient setting, $f^{avl}_t$ remains same as above but $\nabla^{avl}_t$ is equal to $\nabla {f}^{\text{rec}}_{t}$ due to error in the received gradient information.
	
	%\hs{$f_t^{avl}$ is a single value of the function ${f}_{t_{\max}}(.)$ at $x_{t-1}.$}
	%[XXX: avl stands for available information, you had val for some reason]
	%\hs{In the noisy case $\nabla f_t$ is the true value but the one which we receive from oracle is different which is defined as $\nabla f_t^{rec}$. The $f^{avl}$ defined in the ALG is for denoting the function which has come the latest and what information values it has given us. I am using rec in the noisy case so for the received information.}
	
	We now describe the algorithm \textbf{Approximate Online Balanced Descent algorithm} (\textbf{A-OBD}), that will be used to choose action $x_t$ for all the frugal information settings  with only difference being the choice of the parameter $\delta_{t}$, which varies depending on the specific setting. The main idea behind {A-OBD} is as follows.
	
	Define two lines $L_1(x):$ $f^{avl}_t + \nabla^{avl}_t \cdot (x - x_{t-1})$ i.e., the gradient line $\nabla^{avl}_t$ at $x_{t-1}$, and $ L_2(x):$ line with slope \( \sign(-\nabla_t^{val})\delta_t \) passing through $x_{t-1}$ on the $x-$axis. 
	
	At every time step $t,$ {A-OBD} chooses $x_t$ as the intersection point of $L_1(x)$ and $L_2(x).$ Since $L_1$ is the first order approximation of $f^{avl}_t$ (given the information) the motivation behind this choice is to approximately make the hitting cost $f^{avl}_t(x_t)$ equal to $\delta_{t}$ times the switching cost $|x_t-x_{t-1}|.$ This allows the comparison of cost of {A-OBD} with that of the OPT to upper bound the competitive ratio of {A-OBD}.
	
	Define the hitting cost at time \( t \) as \( \mathcal{H}_t := f_t(x_t) \) and the switching cost as \( \mathcal{M}_t := |x_t - x_{t-1}| \). An exact balanced point \( \hat{x}_t \) is defined as the point where 
	\begin{equation}\label{eq:balancedpoint}
		\mathcal{H}_t = \delta_t \mathcal{M}_t.
	\end{equation}
	Since \ALGone operates in a frugal information setting, it chooses $x_t$ that is an approximation of \( \hat{x}_t \) rather than the exact balanced point.

	%	which corresponds to the first order approximation of the point where hitting cost is $\delta_t$ times the switching cost. Essentially, \ALGone updates its position by approximately balancing the trade-off between the hitting cost and the movement cost. The same algorithm is used in all three information settings with different $\delta_t$ values.	\hs{how do I write about relation of $\delta_{t}$ with different info settings?}
	\begin{algorithm}  
		\caption{A-OBD($\delta_t$)} \label{alg:AOBD}
		\begin{algorithmic}[1]
			\For{$t = 1, \ldots, T$}
			\State $L_1(x) \gets f^{avl}_t + \nabla^{avl}_t \cdot (x - x_{t-1})$ 
			\State $L_2(x) \gets \delta_t \cdot \sign(-\nabla^{avl}_t) \cdot (x - x_{t-1})$ 
			\State Find $x_t$ such that $L_1(x) = L_2(x)$ 
			\State \Return $x_t$ 
			\EndFor
		\end{algorithmic}
	\end{algorithm}	
	%	\begin{algorithm}
		%		\caption {A-OBD}
		%		\begin{algorithmic}[1]
			%			\State\textbf{Input:} $x_{0} (\text{starting point)}, $;
			%			\State Choose \(x_{1} = x_0\);
			%			\State\textbf{for} \(t = 2, \ldots, T\)
			%			\State\hspace{3mm} Set \(z_{t} ^ {(0)} = x_{t - 1}\);
			%			\State\hspace{3mm} \textbf{for} \(k = 1, \ldots, K\)
			%			\State\hspace{3mm}\hspace{3mm} Query \(\nabla f_{t - 1}(z_{t} ^ {(k - 1)})\);
			%			\State\hspace{3mm}\hspace{3mm} Update \(z_{t} ^ {(k)} = px{\mathcal{X}}{z_{t} ^ {(k - 1)} - \frac{1}{L}\nabla f_{t - 1}(z_{t} ^ {(k - 1)})}\);
			%			\State\hspace{3mm}\textbf{end}
			%			\State\hspace{3mm} Choose \(x_{t} = z_{t} ^ {(K)}\);
			%			\State\textbf{end}
			%		\end{algorithmic}
		%		\label{alg:omgd}
		%	\end{algorithm}
			\section{Fresh Gradients Setting  }\label{sec:fresh-grad}
	%In this section, we consider OCO-LS problem in setting \ref{setting:fresh-grad} and provide upper bound on $\mu_{\text{A-OBD}}$ and also derive lower bound on $\mu_{\mathcal{A}}$ for any online algorithm $\mathcal{A}$.
	
	Define $x_t^*:= \arg \min_{x \in [0,1]} f_t(x).$ We make the following assumption in this setting.
	\begin{assumption}\label{asm:a1}
		For all $1 \leq t \leq T$, the function $f_t:[0,1]\rightarrow\mathcal{R^+}$ is $M$-smooth, convex and $f_t(x_t^*)=0$\footnote{This is without loss of generality.}. 
	\end{assumption}

	%	Here the A-OBD{$(f_{rec}, \nabla_t^{avl}, x_{t-1}, \delta_t)$} uses constant $\delta_t=\delta_1 \ \forall \ t$ as given in Theorem~\ref{Theorem:UB-fresh}.
	\begin{theorem} \label{Theorem:UB-fresh}
		Under the assumption A\ref{asm:a1}, in the fresh gradient setting \ref{setting:fresh-grad}, the competitive ratio of A-OBD{$(\hat{\delta})$} satisfies
		\begin{align}
			\mu_{\text{A-OBD}\left(\hat{\delta}\right)} \leq \frac{M}{4}+\frac{3}{2}+\frac{1}{2}\sqrt{\left(\frac{M}{2}+1\right)^2+7} = O(M)
		\end{align}
		when the parameter $\hat{\delta} = \frac{1}{2} \sqrt{\left(\frac{M}{2} + 1\right)^2 + 7} - \frac{M}{4} + \frac{1}{2}.$
	\end{theorem}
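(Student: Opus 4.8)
The plan is an amortized (potential-function) comparison against the offline optimum. Throughout, write $o_t$ for the action of $\opt$ at time $t$, $g_t:=\nabla f_t(x_{t-1})$, $h_t:=f_t(x_{t-1})\ge 0$, and $\mathcal{H}_t,\mathcal{M}_t$ as in the text; under A\ref{asm:a1} one has $h_t=0$ exactly when $g_t=0$. By reflecting $[0,1]$ about $\tfrac12$ it suffices to treat $g_t>0$ (if $g_t=0$ the algorithm does not move and the $t$-th inequality below is trivial), and we may take the common start $x_0=o_0$. The first step is to make the A-OBD update explicit: solving $L_1(x_t)=L_2(x_t)$ gives $x_t=x_{t-1}-\mathcal{M}_t$ with $\mathcal{M}_t=h_t/(g_t+\hat\delta)$, and, crucially, the step never overshoots the minimizer — convexity at $x_{t-1}$ gives $0=f_t(x_t^*)\ge h_t+g_t(x_t^*-x_{t-1})$, hence $x_t^*\le x_{t-1}-h_t/g_t<x_{t-1}-h_t/(g_t+\hat\delta)=x_t<x_{t-1}$. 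Thus $x_t\in[0,1]$ (no projection needed) and $f_t$ is nondecreasing on $[x_t,1]$.

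Second, record the two one-step ``balance'' bounds. Convexity gives $\mathcal{H}_t=f_t(x_t)\ge L_1(x_t)=\hat\delta\,\mathcal{M}_t$. $M$-smoothness together with $\mathcal{M}_t\le 1$ gives $\mathcal{H}_t\le L_1(x_t)+\tfrac M2\mathcal{M}_t^2\le(\hat\delta+\tfrac M2)\mathcal{M}_t$. Hence the per-step A-OBD cost is $\mathcal{H}_t+\mathcal{M}_t\le(\hat\delta+\tfrac M2+1)\mathcal{M}_t$, while $\mathcal{M}_t\le\mathcal{H}_t/\hat\delta$.

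The core is the amortized inequality, with potential $\Phi_t:=b\,|x_t-o_t|$ for a constant $b$ to be fixed:
\[\mathcal{H}_t+\mathcal{M}_t+\Phi_t-\Phi_{t-1}\;\le\;\mu\bigl(f_t(o_t)+|o_t-o_{t-1}|\bigr),\]
which, summed over $t$ with $\Phi_0=0$ and $\Phi_T\ge 0$, yields $C_\cA\le\mu\,C_\opt$ and hence the claim. I would prove it by a case split on the position of $o_t$. If $o_t\le x_t$ (so $x_t$ lies between $o_t$ and $x_{t-1}$), then $|x_t-o_t|-|x_{t-1}-o_t|=-\mathcal{M}_t$, so $\Phi_t-\Phi_{t-1}\le-b\,\mathcal{M}_t+b|o_t-o_{t-1}|$ and the left side is $\le(\hat\delta+\tfrac M2+1-b)\mathcal{M}_t+b|o_t-o_{t-1}|$, which for $b\ge\hat\delta+\tfrac M2+1$ is $\le b\bigl(f_t(o_t)+|o_t-o_{t-1}|\bigr)$ (using only $f_t(o_t)\ge0$). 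If $o_t>x_t$, then $o_t$ sits where $f_t$ is nondecreasing, so $f_t(o_t)\ge f_t(x_t)=\mathcal{H}_t$, whence $\mathcal{M}_t\le\mathcal{H}_t/\hat\delta\le f_t(o_t)/\hat\delta$; together with $\Phi_t-\Phi_{t-1}\le b\,\mathcal{M}_t+b|o_t-o_{t-1}|$ (triangle inequality) the left side is $\le\bigl(1+\tfrac{1+b}{\hat\delta}\bigr)f_t(o_t)+b|o_t-o_{t-1}|$. Taking $b$ as small as the first case allows gives the amortized inequality with $\mu=\max\{\,b,\ 1+(1+b)/\hat\delta\,\}$; equating the two arguments of the max produces a quadratic in $\hat\delta$ whose relevant root is the $\hat\delta$ in the statement, and substituting back gives the stated expression for $\mu_{\text{A-OBD}(\hat\delta)}$, which is manifestly $O(M)$.

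The step I expect to be the main obstacle is the amortized inequality in the regime $x_t<o_t<x_{t-1}$: there the potential change is only partially negative, so one must check that the coefficient $b$ fixed in the first case still suffices and, to obtain the precise numerical constant under the square root rather than a marginally larger one, be careful to use $\mathcal{M}_t\le1$ only where it is tight instead of relaxing $\mathcal{M}_t^2\le\mathcal{M}_t$ globally; an alternative that may streamline this is to first bound the competitive ratio against the algorithm that plays the exact balanced point $\hat x_t$ of \eqref{eq:balancedpoint} and then separately control $|x_t-\hat x_t|$. The opening reductions — shifting each $f_t$ so that $f_t(x_t^*)=0$ (valid since it only decreases the ratio), assuming $x_0=o_0$, and normalizing to $g_t>0$ by reflection — are routine and just need stating.
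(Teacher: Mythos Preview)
Your proposal is correct and is essentially the paper's own argument: the same potential $\gamma|x_t-x_t^{\opt}|$, the same two balance bounds $\hat\delta\,\mathcal{M}_t\le\mathcal{H}_t\le(\hat\delta+M/2)\mathcal{M}_t$ (your second step is exactly the paper's Lemmas~\ref{lemma:lb-cost}--\ref{lemma:ub-cost}), and the same two-case amortization yielding $\max\{\delta+1+M/2,\,2+2/\delta+M/(2\delta)\}$, with the paper splitting on $\mathcal{H}_t\gtrless\mathcal{H}_t^{\opt}$ where you split on the position of $o_t$ relative to $x_t$. Your anticipated ``main obstacle'' in the regime $x_t<o_t<x_{t-1}$ is not actually an obstacle: that regime already falls under your case $o_t>x_t$, where $o_t>x_t\ge x_t^*$ gives $f_t(o_t)\ge\mathcal{H}_t$ and your case-2 bound applies verbatim, so no further work or sharper use of $\mathcal{M}_t\le1$ is needed.
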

	The competitive ratio of A-OBD scales linearly with $M$. 
	The dependence on the smoothness parameter \( M \) in Theorem~\ref{Theorem:UB-fresh} arises because of the discrepancy between the approximated balanced point and the exact balanced point 
	$ \hat{x}_t$ \eqref{eq:balancedpoint} increases with \( M \). As \( M \) grows, the function $f_t$'s curvature becomes more pronounced, making the first-order approximation less accurate and causing greater deviation from the exact balanced point.  
	
	%This is really on account of only knowing the gradient of $f_t$ and not full function $f_t$, since two functions with the same gradient and smoothness parameter $M$ can be sufficiently far apart and A-OBD
	
	%Theorem~\ref{Theorem:UB-fresh} establishes an upper bound for A-OBD, showing that its growth is at most of order \( M \) when using a constant \(\delta_t\) for all \( t \).  

	The proof of Theorem~\ref{Theorem:UB-fresh} follows the potential function method used in \cite{bansal2015}. However, unlike \cite{bansal2015}, where $x_t$ is  \( \hat{x}_t \) \eqref{eq:balancedpoint}, A-OBD(\(\hat{\delta}\)) chooses $x_t$ such that \( \mathcal{H}_t \) is bounded from both above and below by linear multiples of \( \mathcal{M}_t \), ensuring an approximate balance between the hitting and the movement cost. While \cite{bansal2015} uses a fixed $\delta_t=2,$ here $\hat{\delta}$ depends on $M$ because A-OBD moves to an approximated balanced point rather than the exact one. A detailed proof is provided in Appendix~\ref{proof:theorem-1}.

	Next, we show that the dependence of $M$ on the competitive ratio of any online algorithm is fundamental.
	
	\begin{theorem}\label{Theorem:LB-fresh}
		Under the assumption A\ref{asm:a1}, in the fresh gradient setting \ref{setting:fresh-grad}, any online algorithm \ALG must have a competitive ratio of at least $\Omega(M).$ 
	\end{theorem}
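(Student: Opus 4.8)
}
The plan is to show that a \emph{single} adversarial cost function already forces competitive ratio $\Omega(M)$: the first‑order information revealed at the starting point will be consistent with a family of admissible $M$‑smooth convex functions whose minimizers are spread $\Theta(1)$ apart, so that wherever the algorithm moves, the adversary can realize a function from the family whose value there is $\Omega(M)$, while $\opt$ pays $O(1)$ by sitting at the (adversarially chosen) minimizer. The basic fact exploited is that a convex $M$‑smooth $f$ with $\min f = 0$ obeys $|\nabla f(x)|\le \sqrt{2Mf(x)}$: at the steepest admissible slope the function is essentially pinned down near $x$, but with a strictly gentler slope the location of its minimizer is genuinely ambiguous over a constant‑length interval, and (crucially) a function descending from a $\Theta(M)$ value with curvature at most $M$ cannot be uniformly small on $[0,1]$.

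Take the starting point $x_0=0$ and action set $[0,1]$, and let $M$ be large. At $t=1$ the adversary reveals $f_1(0)=M/8$ and $\nabla f_1(0)=-M/4$; this is admissible since $M/4<\sqrt{2M\cdot M/8}=M/2$. Two functions consistent with this information are $f^{R}(x)=\tfrac{M}{8}(x-1)^2$, whose minimizer over $[0,1]$ is $1$, and $f^{L}$, the convex $C^1$ function that equals $\tfrac{M}{8}-\tfrac{M}{4}x$ on $[0,\tfrac38]$, equals $\tfrac{M}{2}(x-\tfrac58)^2$ on $[\tfrac58,1]$, and is the quadratic with second derivative $M$ joining these pieces on $[\tfrac38,\tfrac58]$; one checks $f^{L}$ is nonnegative, $M$‑smooth, has $\min f^L = 0$ attained at $\tfrac58$, and matches $(f_1(0),\nabla f_1(0))$ at $0$. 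Since $\cA$ is deterministic and sees only $(f_1(0),\nabla f_1(0))$ before committing to $x_1$, its choice $x_1\in[0,1]$ is fixed; the adversary then sets $f_1=f^{R}$ if $x_1\le \tfrac34$ and $f_1=f^{L}$ if $x_1>\tfrac34$.

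In the first case $f_1(x_1)=\tfrac{M}{8}(x_1-1)^2\ge \tfrac{M}{8}\big(\tfrac14\big)^2=\tfrac{M}{128}$, and in the second case (here $x_1>\tfrac34>\tfrac58$, so $x_1$ lies on the rising branch of $f^{L}$) $f_1(x_1)=\tfrac{M}{2}(x_1-\tfrac58)^2\ge \tfrac{M}{2}\big(\tfrac18\big)^2=\tfrac{M}{128}$. Hence $C_{\cA}\ge f_1(x_1)\ge M/128$ regardless of $x_1$. On the other hand $\opt$ moves once to $x^\star=\arg\min f_1$ and stays, so $C_{\opt}\le f_1(x^\star)+|x^\star-x_0|=x^\star\le 1$ (and $C_{\opt}>0$ since $f_1(0)>0$). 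Therefore $C_{\cA}/C_{\opt}\ge M/128=\Omega(M)$, which is the claimed bound for deterministic algorithms; the randomized case follows from Yao's principle by drawing $f_1\in\{f^{R},f^{L}\}$ from a fixed distribution. If a longer instance is preferred, one may append copies of a function minimized at $x^\star$, which changes neither cost.

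The step I expect to be the crux is verifying that the revealed pair $(M/8,-M/4)$ is "ambiguous enough'': one must confirm it is realized by admissible functions with minimizers as far apart as $\tfrac58$ and $1$, and—more delicately—that \emph{every} point $x_1\in[0,1]$ the algorithm might pick is covered, i.e.\ one of the two realizations still has value $\ge \Omega(M)$ at $x_1$. This is exactly the observation that the parabolic arms $\tfrac{M}{8}(x-1)^2$ and $\tfrac{M}{2}(x-\tfrac58)^2$ jointly stay $\ge \tfrac{M}{128}$ on all of $[0,1]$, the two being equal precisely at $x=\tfrac34$; getting this numerology (and the admissibility of $f^{L}$) right is the only real work, everything else being bookkeeping.
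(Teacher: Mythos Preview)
Your proof is correct and follows the same adversarial strategy as the paper: construct a small family of admissible $M$-smooth convex functions sharing the same value and gradient at $x_0=0$, then let the adversary realize whichever one has value $\Omega(M)$ at the algorithm's committed point $x_1$, while $\opt$ pays $O(1)$ by moving to the realized minimizer. The paper carries this out with three functions $g_1,g_2,g_3$ (minimizers at $3/5,4/5,1$ after setting $\nabla=2M/5$) and a four-interval case split to get the constant $2/135$; your two-function construction with minimizers at $5/8$ and $1$ and a single threshold at $3/4$ is cleaner and in fact yields the slightly better constant $1/128$.
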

	
	We obtain Theorem~\ref{Theorem:LB-fresh} by constructing functions that appear identical to any online algorithm but are inherently different. 
	%	Since \ALG only receives function and gradient information at \( x_{t-1} \), if two distinct functions share the same values and gradients at these points, \ALG cannot distinguish between them.  
	The basic idea of the proof is as follows. Consider two \( M \)-smooth functions, \( g_1 \) and \( g_2 \), satisfying \( g_1(x_0) = g_2(x_0) \) and \( g_1'(x_0) = g_2'(x_0) \). Since we are in the fresh gradient setting,  any online algorithm cannot distinguish between $g_1$ and $g_2$, and will choose the same action $x_1$ at time $1$. 
	However, the action $x_1^{OPT}$ chosen by the OPT will vary depending on the structural differences between the full functions \( g_1 \) and \( g_2 \). To derive the lower bound, we will choose $g_1$ and $g_2$ in such a way that $x_1^{OPT}$ is significantly different when the functions are \( g_1 \) and \( g_2 \). A detailed proof is presented in appendix \ref{proof:Theorem-2}.
	
	% [XXX: Move everything after this to the appendix]
	
	\section{Stale Gradient setting}\label{sec:stale-grad}
	
	%	In this setting, action $x_t$ is taken at time $t$ by using the received function value and the gradient value of the previous function $f_{t-1}$ at the previously moved action $x_{t-1}.$
	In this section, we consider OCO-S problem in setting \ref{setting:stale-grad} and provide upper bound on $\mu_{\text{A-OBD}}$. For studying the stale gradient setting \ref{setting:stale-grad}, in addition to  assumption A\ref{asm:a1} we will make an additional assumption: 
	\begin{assumption}\label{asm:a2}
		For all $1 \leq t \leq T$, the function $f_t:[0,1]\rightarrow\mathcal{R^+}$ is $L$-Lipschitz.
	\end{assumption}

	%Here the \ALGone $(\delta_{t})$ uses constant $\delta_t=\delta_2 \ \forall \ t$ as specified in Theorem~\ref{Theorem:UB-stale}.

	\begin{theorem}\label{Theorem:UB-stale}
		Under the assumption A\ref{asm:a1}. and A\ref{asm:a2}. in the stale gradient setting \ref{setting:stale-grad}, the competitive ratio of A-OBD($ \tilde{\delta}$) satisfies
		
		\begin{equation}
			\begin{gathered}
				\begin{aligned}
					\mu_{\text{A-OBD} \ (\tilde{\delta})} & \leq \frac{M+2L+6}{4}+ \nonumber \frac{\sqrt{(M+2L-2)^2-8(M+4L+4)}}{4}, \nonumber \\
					& = \mathcal{O}(M+L)
				\end{aligned}
			\end{gathered}
		\end{equation}
		when the parameter 
		\begin{equation} \nonumber
			\begin{gathered}
				\begin{aligned}
					\tilde{\delta} =  \frac{\sqrt{(M+2L-2)^2-8(M+4L+4)}}{4}  - \frac{M+2L-2}{4} .
				\end{aligned}
			\end{gathered}
		\end{equation}
	\end{theorem}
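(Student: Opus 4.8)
\textbf{Proof proposal for Theorem~\ref{Theorem:UB-stale}.}
The plan is to mirror the potential-function argument used for Theorem~\ref{Theorem:UB-fresh}, but carefully accounting for the extra error incurred because the gradient and function value are now stale, i.e.\ they describe $f_{t-1}$ at $x_{t-1}$ rather than $f_t$ at $x_{t-1}$. First I would set up the same potential $\Phi_t$ used in \cite{bansal2015} (roughly proportional to the distance from the current point to the relevant optimizer, or to $f_t$-level-set geometry), and seek a per-step inequality of the form $\mathcal{H}_t + \mathcal{M}_t + \Phi_t - \Phi_{t-1} \le \mu\,(\mathcal{H}^{\opt}_t + \mathcal{M}^{\opt}_t)$, which on summation over $t$ telescopes to give the competitive ratio $\mu$. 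The key structural fact to establish is a two-sided sandwich on the hitting cost: because A-OBD chooses $x_t$ as the intersection of the stale gradient line $L_1$ with the line $L_2$ of slope $\sign(-\nabla^{avl}_t)\tilde\delta$, one gets $\mathcal{H}_t$ bounded above and below by constant multiples of $\mathcal{M}_t$, but now the multiplicative constants are inflated relative to the fresh case by terms involving $L$ (from the discrepancy $|f_t(x_{t-1}) - f_{t-1}(x_{t-1})|$ and $|\nabla f_t(x_{t-1}) - \nabla f_{t-1}(x_{t-1})|$, which are controlled by the $L$-Lipschitz assumption A\ref{asm:a2}) and by $M$ (from the first-order-approximation error, controlled by $M$-smoothness as in the fresh case).

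Concretely, I would carry out the steps in this order. (1) Bound the staleness error: show $|f^{avl}_t - f_t(x_{t-1})| \le$ something like $L\,\mathcal{M}_{t-1}$ or a constant multiple of the previous movement, and similarly for the gradient, using A\ref{asm:a2}; also relate $\nabla f_{t-1}(x_{t-1})$ to $\nabla f_t(x_{t-1})$. (2) Using these, prove the sandwich $a\,\mathcal{M}_t \le \mathcal{H}_t \le b\,\mathcal{M}_t$ (with $a,b$ explicit functions of $M$, $L$, $\tilde\delta$), combining the staleness error with the $M$-smoothness bound on how far the true $f_t$ deviates from its linearization. (3) Plug the sandwich into the potential-drop inequality and split into the usual cases according to whether OPT's point lies on the same side of $x_{t-1}$ as $x_t$, whether OPT moves toward or away, and whether $\mathcal{H}^{\opt}_t$ is large or small; in each case bound $\Phi_t - \Phi_{t-1}$ using the geometry of the lines. (4) Collect the constraints on $\mu$ from all cases; they should reduce to a quadratic inequality in $\mu$ whose boundary, after substituting the stated $\tilde\delta$ (which is chosen precisely to balance the competing case constraints), gives the claimed closed form $\mu \le \tfrac{M+2L+6}{4} + \tfrac{\sqrt{(M+2L-2)^2 - 8(M+4L+4)}}{4}$. (5) Verify the discriminant $(M+2L-2)^2 - 8(M+4L+4)$ is nonnegative in the regime of interest (large $M$ or $L$), so $\tilde\delta$ and $\mu$ are real, and read off the asymptotic $\mathcal{O}(M+L)$.

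The main obstacle I anticipate is step (2) together with the bookkeeping in step (3): in the stale setting the error at time $t$ is tied to the \emph{movement at time $t-1$}, so the per-step inequality is not self-contained — the $\mathcal{M}_{t-1}$ term must be absorbed either into the potential (by enlarging $\Phi$ to carry a memory term) or charged against the previous step's hitting cost via the sandwich applied at $t-1$. Getting the constants to line up so that the telescoping still closes, rather than leaking a $\mathcal{M}_{t-1}$ that accumulates, is the delicate part, and it is what forces the specific algebraic form of $\tilde\delta$. A secondary subtlety is handling the boundary of the domain $[0,1]$ and the sign bookkeeping in $\sign(-\nabla^{avl}_t)$ when the stale gradient points the "wrong" way relative to the true $f_t$; I would treat that as a separate (easy) case where the movement is small and the hitting cost bound follows directly from $L$-Lipschitzness. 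A full proof is deferred to the appendix.
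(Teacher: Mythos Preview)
Your approach has a genuine gap at step~(1). Assumption A\ref{asm:a2} says each $f_t$ is $L$-Lipschitz in its argument, i.e.\ $|f_t(x)-f_t(y)|\le L|x-y|$; it gives no control whatsoever on $|f_{t-1}(x_{t-1})-f_t(x_{t-1})|$ or on $|\nabla f_{t-1}(x_{t-1})-\nabla f_t(x_{t-1})|$, since $f_{t-1}$ and $f_t$ are unrelated functions chosen adversarially. So the staleness bound you need simply does not follow from the hypotheses, and without it your sandwich in step~(2) cannot be established. The obstacle you anticipated (the leaking $\mathcal{M}_{t-1}$ term) and the sign-flip worry are symptoms of the same issue: you are trying to compare the stale information to the current function directly, and there is no assumption that makes that comparison tractable.

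The paper sidesteps all of this with a one-line structural observation (Lemma~\ref{lemma:cost-relation}): for a fixed input $\{f_t\}$, the stale-setting trajectory is exactly the fresh-setting trajectory shifted by one step, $x_t^{stale}=x_{t-1}^{fresh}$. Indeed, at time $t$ the stale algorithm sees $f_{t-1}(x_{t-1}^{stale}),\nabla f_{t-1}(x_{t-1}^{stale})$, which by induction equals $f_{t-1}(x_{t-2}^{fresh}),\nabla f_{t-1}(x_{t-2}^{fresh})$, precisely the input the fresh algorithm used to produce $x_{t-1}^{fresh}$. Then $C^{stale}-C^{fresh}\le\sum_t|f_t(x_{t-1})-f_t(x_t)|\le L\sum_t\mathcal{M}_t$, using A\ref{asm:a2} on the \emph{same} function $f_t$ at two different points. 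This yields $C^{stale}\le\sum_t\bigl(\mathcal{H}_t+(L+1)\mathcal{M}_t\bigr)$ in terms of the fresh-setting quantities, so Lemmas~\ref{lemma:lb-cost} and~\ref{lemma:ub-cost} apply unchanged. The potential argument of Theorem~\ref{Theorem:UB-fresh} then runs verbatim with $\mathcal{M}_t$ replaced by $(L+1)\mathcal{M}_t$; optimizing over $\delta$ produces the stated $\tilde\delta$ and bound. No new sandwich, no memory term in the potential, and no cross-function gradient comparison are needed.
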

	%	Theorem~\ref{Theorem:UB-stale} establishes an upper bound for A-OBD, showing that its growth is at most of order \( M+L \) when using a constant \( \delta_t \) for all \( t \).  
	For a fixed input sequence ${\cal F} = \{f_t\}_{t=1}^T$, let the actions of  algorithm A-OBD($\tilde{\delta}$) in the fresh gradient setting be  $x_t^{fresh}$. Then actions of  algorithm A-OBD($\tilde{\delta}$) for ${\cal F}$ in the stale gradient setting are $x_t^{stale} = x_{t-1}^{fresh}$. This allows us to relate the cost  of A-OBD($\tilde{\delta}$) in the fresh and stale gradient setting, and we derive 
	Theorem \ref{Theorem:UB-stale} using Theorem \ref{Theorem:UB-fresh}'s result while replacing $\hat{\delta}$ with $\tilde{\delta}$. 
%	[XXX: Use this explanation in your proof too, currently its not clear why certain facts are true]
	
	%	let $C^{fresh} = \sum_{t=1}^{T}f_t(x_{t}) + \sum_{t=1}^{T}|x_{t}-x_{t-1}|$ while
	%					$C^{stale}  = \sum_{t=1}^{T}f_t(x_{t-1}) + \sum_{t=1}^{T-1}|x_{t}-x_{t-1}|$, where the only difference is the action at which $f_t$ is evaluated, which is $x_t$ in $C_{\mathcal{A}}^{fresh}$ and $x_{t-1}$ in $C_{\mathcal{A}}^{stale}$.

	%	Define \( \mathcal{C}^{\text{fresh}}_{\mathcal{A}} \) and \( \mathcal{C}^{\text{stale}}_{\mathcal{A}} \) as the total cost of algorithm \( \mathcal{A} \) in settings \ref{setting:fresh-grad} and \ref{setting:stale-grad}, respectively. When \( \mathcal{A} \) is applied in both settings with the same loss functions \( f_t \) for all \( t \), the actions taken in the stale gradient setting are same as fresh gradient setting but delayed by one time step. Consequently, the total switching costs remain the same in both settings, while the hitting cost differs since actions are evaluated on different functions.	
	
	%Using this relationship, Lemma~\ref{lemma:cost-relation} upper bounds $\mathcal{C}^{\text{stale}}_{\mathcal{A}}$ in terms of    $\mathcal{C}^{\text{fresh}}_{\mathcal{A}}$ and total switching cost. Using the previous lemma, we can transform the problem of bounding $\mathcal{C}^{\text{stale}}_{\mathcal{A}}$ to bounding $ \mathcal{C}^{\text{fresh}}_{\mathcal{A}}$ with a higher switching cost.
	
	Note that the upper bound for \ALGone in stale gradient setting grows at most with order $M+L$. The $M$ contribution is similar to the fresh gradient setting, while the Lipschitz constant $L$ is on account of the delay from stale gradients. 	
	% Denote the costs incurred by the algorithm in the fresh and stale gradient settings as $C_{\text{A-OBD} \ (\delta_{2})}^{fresh}$ and $C_{\text{A-OBD} \ (\delta_{2})}^{stale}$, respectively. Note that we are using same $\delta$ parameter in both settings which differs from the parameter used in the fresh gradient setting discussed in Section~\ref{sec:fresh-grad}. 
	
	%        This difference can be bounded in terms of $\mathcal{M}_t$ using the Lipschitz property of the functions.
	
	% Obtaining this relationship, we observe that $C_{\text{A-OBD} \  (\delta_{2})}^{stale}$ is bounded above by a cost similar to $C_{\text{A-OBD} \  (\delta_{2})}^{fresh}$, but with a higher movement cost. Hence the competitive ratio can be upper bounded in a similar way. This result is formalized as follows: 

	\subsection{Lower Bound on Competitive ratio}
	Let $x_0=0$ without loss of generality. Then let $f_1(x)$ be either $x-1$ or $L(x-1)$ and $T=1$. Clearly $C_{OPT}$ is independent of $L$, while $C_{{\cal A}} = \Omega(L)$. Thus, we get that in the stale gradient setting, $\mu_{{\cal A}}= \Omega(L)$ and combining Theorem \ref{Theorem:LB-fresh}, we get $\mu_{{\cal A}}= \Omega(\max\{L,  M\})$.
	%[XXX: Just appeal to Spandan's result for $\Omega(L)$ even for multiple delayed gradients and combine it with Theorem 2 to get $\Omega(M,L)$ as the lower bound.
	%\hs{This part is left to do.}
	\section{Noisy Gradient setting}	
	We only present the case of fresh noisy gradient, where the result for stale case follows similar to Section \ref{sec:stale-grad}.
	%In this setting, action $x_t^{rec}$ is taken at time $t$ by using the gradient value of the present function $f_t$ at the previously moved action $x_{t-1}^{rec}$. However, the gradient information provided to the algorithm is corrupted by additive noise, introducing errors into the decision process. At each time step $t$, the received gradient $\nabla_{t}^{rec}$ deviates from the true gradient $\nabla_{t}^{true}$ by at most a tolerance of $\alpha >0$, satisfying:
	Recall that $\nabla^{avl}_t$ is the approximate estimate  of $\nabla_t(x_{t-1})$ available to any online algorithm
	with error at most $\alpha$ (i.e. $|\nabla^{avl}_t-\nabla_t | \leq \alpha$), where $\nabla_t:=\nabla f_t(x_{t-1})$.

	Note that $\alpha$ remains unknown and is not used by algorithm A-OBD, and  the guarantees we derive are a function of the unknown $\alpha$.
	In addition to  A\ref{asm:a1}, A\ref{asm:a2} we need to make two additional assumptions:
	
	\begin{assumption}\label{asm:a3}
		The magnitude of the received gradient, \( \nabla_t^{avl} \), is bounded above by a constant \( G \), i.e. $|\nabla_t^{avl}| \leq G.$
	\end{assumption}
	\begin{assumption}\label{asm:a4}
		The error threshold does not exceed the true gradient value, $\alpha < |\nabla_t|.$
	\end{assumption}
	The reason required to make assumption A\ref{asm:a4} is detailed in appendix \ref{sec:a4}.
	% The previous assumption A~\ref{asm:a4} is made to ensure that $\nabla_t^{avl}, \nabla_t$ are of same sign. Otherwise, the direction of received gradient would be opposite of the true one and no algorithm can move in the right direction. \pc{Shall we just say rationale behind making A4 is provided in the appendix. and in the appendix we can explain more by taking counterexamples and stuff?} \\ 
	In this setting, input to A-OBD, $\delta_t$, depends on $\nabla_{t}^{avl}$ as given in Theorem~\ref{Theorem:UB-noisy}.
	\begin{theorem}\label{Theorem:UB-noisy}
		Under the assumptions A\ref{asm:a1}, A\ref{asm:a2}, A\ref{asm:a3}, A\ref{asm:a4}, in the noisy fresh gradient setting \ref{setting:noisy-grad}, the competitive ratio of \ALGone ($\delta_{t}$) algorithm satisfies
		\begin{align}
			\mu_{\text{A-OBD} (\delta_{t})} &\leq \max \left\{1+\frac{2+\hat{b}}{\hat{a}},1+\hat{b}\right\}= O (M+L+\alpha^2), \nonumber
		\end{align}
		where
			$\hat{a} = \left(L-\frac{\alpha L}{G+L-2\alpha}\right)\left(1+\frac{\alpha}{G+L-\alpha}\right)$,\\
			$\hat{b} = \left(G+L+\frac{M}{2}+\frac{\alpha L}{G+L}\right)\left(1+\frac{\alpha}{G+L-\alpha}\right)$ and \\ $\delta_t = G+L-\nabla^{avl}_t$. 
	\end{theorem}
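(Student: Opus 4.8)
The plan is to run the potential-function argument of Bansal et al.~[bansal2015] (the same scheme behind Theorem~\ref{Theorem:UB-fresh}), but with the point $x_t$ replaced by what A-OBD actually plays in the noisy setting and with the noise level $\alpha$ and the gradient bound $G$ tracked through every per-step inequality. Write $o_t$ for OPT's action, and $\mathcal{H}^{*}_t := f_t(o_t)$, $\mathcal{M}^{*}_t := |o_t - o_{t-1}|$ for OPT's hitting and movement costs.

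\emph{First}, characterize A-OBD's step. Solving $L_1(x_t)=L_2(x_t)$ with $f^{avl}_t=f_t(x_{t-1})$ gives $\mathcal{M}_t=|x_t-x_{t-1}|=f_t(x_{t-1})\big/\bigl(\delta_t+|\nabla^{avl}_t|\bigr)$ together with the surrogate hitting cost $L_1(x_t)=\delta_t\mathcal{M}_t$. Since $\delta_t=G+L-\nabla^{avl}_t$ and $|\nabla^{avl}_t|\le G$ (A\ref{asm:a3}), we get $\delta_t>0$ and $\delta_t+|\nabla^{avl}_t|\ge G+L$; with $f_t(x_{t-1})\le L$ (from A\ref{asm:a2} and $f_t(x_t^*)=0$, $x_{t-1},x_t^*\in[0,1]$) this yields $\mathcal{M}_t\le 1$ and explicit two-sided control of $\mathcal{M}_t$. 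Assumption A\ref{asm:a4} enters exactly here: $\alpha<|\nabla_t|$ and $|\nabla^{avl}_t-\nabla_t|\le\alpha$ force $\sign(\nabla^{avl}_t)=\sign(\nabla_t)$, so A-OBD always steps from $x_{t-1}$ toward the true minimizer $x_t^*$ --- the descent property the potential argument relies on.

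\emph{Next}, relate $\mathcal{H}_t$ to $\mathcal{M}_t$ with the noise included. Let $\ell_t(x)=f_t(x_{t-1})+\nabla_t(x-x_{t-1})$ be the true tangent at $x_{t-1}$. Convexity of $f_t$ gives $\mathcal{H}_t=f_t(x_t)\ge \ell_t(x_t)=L_1(x_t)+(\nabla_t-\nabla^{avl}_t)(x_t-x_{t-1})\ge \delta_t\mathcal{M}_t-\alpha\mathcal{M}_t$, while $M$-smoothness (A\ref{asm:a1}) gives $\mathcal{H}_t\le \ell_t(x_t)+\tfrac{M}{2}\mathcal{M}_t^{2}\le \delta_t\mathcal{M}_t+\alpha\mathcal{M}_t+\tfrac{M}{2}\mathcal{M}_t$ using $\mathcal{M}_t\le 1$. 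Substituting $\delta_t=G+L-\nabla^{avl}_t$ and taking the worst case over $\nabla^{avl}_t$ subject to $|\nabla^{avl}_t|\le G$, $|\nabla_t|\le L$, $|\nabla^{avl}_t-\nabla_t|\le\alpha$ --- and, in the lower bound, comparing the movement A-OBD makes with the movement the exact balanced point $\hat x_t$ would require (this is where the factor $1+\frac{\alpha}{G+L-\alpha}$ and the correction $\frac{\alpha L}{G+L-2\alpha}$ appear) --- produces constants with $\hat a\,\mathcal{M}_t\le \mathcal{H}_t\le \hat b\,\mathcal{M}_t$. Then take the potential $\Phi_t=(\mu-1)\,|x_t-o_t|$ with $\mu$ the claimed ratio, and establish the amortized bound $\mathcal{H}_t+\mathcal{M}_t+\Phi_t-\Phi_{t-1}\le \mu\bigl(\mathcal{H}^{*}_t+\mathcal{M}^{*}_t\bigr)$ for every $t$; summing and using $\Phi_0=0$, $\Phi_T\ge0$ yields $C_{\text{A-OBD}}\le\mu\,C_{\opt}$. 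After the triangle inequality $|x_{t-1}-o_{t-1}|\ge|x_{t-1}-o_t|-\mathcal{M}^{*}_t$ removes $o_{t-1}$, the claim reduces to a single-time comparison handled by cases on the position of $o_t$ relative to $x_{t-1},x_t,x_t^*$: when $o_t$ lies outside the interval A-OBD sweeps, the movement and potential-decrease terms already suffice and the binding bound is $1+\hat b$; when $o_t$ lies between $x_{t-1}$ and $x_t$, convexity lower-bounds $\mathcal{H}^{*}_t$ and the lower bound $\hat a\,\mathcal{M}_t\le\mathcal{H}_t$ gives $1+\frac{2+\hat b}{\hat a}$; the competitive ratio is the maximum of the two, which is $O(M+L+\alpha^{2})$. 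The stale-noisy case follows exactly as in Section~\ref{sec:stale-grad} via $x_t^{stale}=x_{t-1}^{fresh}$.

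\emph{Main obstacle.} The delicate part is obtaining the exact constants $\hat a,\hat b$ rather than crude $O(M+L+\alpha^{2})$ estimates: the lower bound on $\mathcal{H}_t$ degrades precisely when the reported gradient underestimates $|\nabla_t|$, making A-OBD overshoot toward $x_t^*$, and this is where A\ref{asm:a4} and the $\frac{\alpha L}{G+L-2\alpha}$-type corrections are unavoidable; the $\alpha^{2}$ in the final ratio surfaces only after $\hat a,\hat b$ are combined in the $\max$. A secondary nuisance is the subcase of the potential analysis in which $o_t$ falls between $x_t$ and $x_t^*$, where one must compare $f_t(o_t)$ with $f_t(x_t)$ by convexity while accounting for possible clipping of $x_t$ to $[0,1]$.
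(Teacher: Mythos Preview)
Your high-level scheme---derive sandwich bounds $\hat a\,\mathcal{M}_t\le\mathcal{H}_t\le\hat b\,\mathcal{M}_t$ and then rerun the potential-function argument of Theorem~\ref{Theorem:UB-fresh} with the case split $\mathcal{H}_t\gtrless\mathcal{H}_t^{OPT}$---is exactly what the paper does. The divergence, and the place where your proposal becomes a hand-wave, is in how the sandwich constants are obtained.

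The paper does \emph{not} bound $\mathcal{H}_t$ by convexity/smoothness directly at the noisy iterate $x_t$. It introduces an auxiliary point $x_t^{true}$ (what A-OBD would output given the \emph{true} gradient $\nabla_t$, with the same $\delta_t$) together with two extremal points $\underline{x_t},\overline{x_t}$ obtained by feeding the subroutine $\nabla_t\mp\alpha$. It then (i) applies Lemmas~\ref{lemma:lb-cost} and~\ref{lemma:ub-cost} at $x_t^{true}$ to get $\delta_t\mathcal{M}_t^{true}\le\mathcal{H}_t^{true}\le(\delta_t+M/2)\mathcal{M}_t^{true}$; (ii) uses $L$-Lipschitzness (A\ref{asm:a2}) to bound $|\mathcal{H}_t^{true}-\overline{\mathcal{H}_t}|$ and $|\mathcal{H}_t^{true}-\underline{\mathcal{H}_t}|$ by $L$ times the gap between the corresponding points; and (iii) converts $\mathcal{M}_t^{true}$ to $\overline{\mathcal{M}_t},\underline{\mathcal{M}_t}$ via the explicit ratio $(|\nabla_t|+\delta_t)/(|\nabla_t|+\delta_t\pm\alpha)$, followed by the substitution $|\nabla_t|\ge|\nabla_t^{avl}|-\alpha$. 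The correction $\frac{\alpha L}{G+L-2\alpha}$ is precisely the output of step~(ii), and the multiplicative factor $\bigl(1\pm\frac{\alpha}{G+L-\alpha}\bigr)$ is the output of step~(iii); neither arises from a convexity estimate at $x_t$.

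Your direct inequality $\mathcal{H}_t\ge\ell_t(x_t)\ge(\delta_t-\alpha)\mathcal{M}_t$ (and its smoothness analogue) is perfectly valid, but after substituting $\delta_t=G+L-|\nabla_t^{avl}|$ and taking worst case it yields constants of the form $a\approx L-\alpha$, $b\approx G+L+\tfrac{M}{2}+\alpha$, not the stated $\hat a,\hat b$. The sentence where you say the factors $1+\frac{\alpha}{G+L-\alpha}$ and $\frac{\alpha L}{G+L-2\alpha}$ ``appear'' from comparing with the exact balanced point $\hat x_t$ is the gap: those factors are artifacts of the Lipschitz comparison with $x_t^{true}$ (not $\hat x_t$) and the movement-ratio conversion, and they simply do not fall out of your convexity bound by any worst-casing over $\nabla_t^{avl}$. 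So your route would establish a variant of the theorem with different explicit constants (plausibly with the same $O(M+L+\alpha^2)$ asymptotic once $\hat a^{-1}$ is expanded), but not the statement as written; to recover the specific $\hat a,\hat b$ you must go through the extremal-point/Lipschitz detour the paper uses.
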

	%	Theorem \ref{Theorem:UB-noisy} is proved using the potential function method similar to previous sections. 
	% Define the hitting cost as $\mathcal{H}_t := f_t(x_{t}^{rec})$ and movement cost as $ \mathcal{M}_t := |x_t^{rec} - x_{t-1}^{rec}|$. 
	To prove \ref{Theorem:UB-noisy},  we derive bounds on the hitting cost $\mathcal{H}_t$ in terms of the switching cost $ \mathcal{M}_t$ while accounting for the noise in the gradient value. To address uncertainty in gradient estimates, we use dynamic $\delta_{t}$ values changing at every time step according to the value of the gradient estimate.
	Specifically, we analyze two extreme scenarios that the gradient value can take and use them to establish constants $a,b$ (as stated in Theorem \ref{Theorem:UB-noisy}) such that $a\mathcal{M}_t \leq \mathcal{H}_t \leq b\mathcal{M}_t$. Then we use the same potential function method as used to prove  Theorem~\ref{Theorem:UB-fresh}. A detailed proof is provided in appendix \ref{proof:Theorem-4}. 
	
	Note that the competitive ratio for \ALGone in noisy gradient setting grows at most with order $M+L+\alpha^2$. The dependence on $M$ arises due to similar reasons presented in fresh gradient setting, while $L$ and $\alpha$ arise because of the error due to noisy gradient. 
	%The quadratic dependence on $\alpha$..\pc{needs to be filled}\\
	
	We now establish a lower bound for any online algorithm operating in the noisy fresh gradient setting.
	\begin{theorem}\label{Theorem:LB-noisy}
		Under the assumptions A\ref{asm:a1}, A\ref{asm:a2}, A\ref{asm:a3}, A\ref{asm:a4}, in the noisy fresh gradient setting \ref{setting:noisy-grad}, any online algorithm must have a competitive ratio at least 
		$\Omega( \alpha)$.
		%		 [XXX: Do not understand this, shouldnt this be a function of $\alpha$?]. 
	\end{theorem}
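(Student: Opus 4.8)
The plan is to mimic the construction behind Theorem~\ref{Theorem:LB-fresh}, but instead of exploiting the curvature gap of two $M$-smooth functions with matching value and gradient at $x_0$, I would exploit the gradient uncertainty directly. Fix $T=1$ and $x_0=0$. The adversary picks a single convex, $L$-Lipschitz, $M$-smooth function $f_1$ on $[0,1]$ with $f_1(x_1^*)=0$, and reports to the algorithm a noisy gradient $\nabla_1^{avl}$ at $x_0$ that is consistent (up to the error $\alpha$) with at least two genuinely different candidate functions. Concretely, I would take two functions $f_1^{(+)}$ and $f_1^{(-)}$ whose true gradients at $x_0$ differ by roughly $2\alpha$, arranged so that their minimizers $x_1^{*,(+)}$ and $x_1^{*,(-)}$ are separated by a distance $\Theta(\alpha/M)$ (using that for an $M$-smooth function the minimizer is at distance about $|\nabla f_1(x_0)|/M$ from $x_0$), and so that the reported value $\nabla_1^{avl}$ (taken as the average, say) is within $\alpha$ of both true gradients. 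Because the algorithm sees only $\{f_1(x_0),\nabla_1^{avl}\}$, which is identical in both scenarios, it must commit to the same $x_1$; then OPT, which in the $T=1$ case simply plays $x_1^*$, pays $C_{OPT}=|x_1^*|$ in each scenario.

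The core estimate is then a two-case argument on the algorithm's fixed choice $x_1$. In whichever scenario the algorithm's hitting cost $f_1(x_1)$ is not already large, its switching-plus-hitting cost must still be compared against $C_{OPT}=|x_1^*-x_0|$, which is of order $|\nabla_1|/M = \Theta(\alpha/M)$ when the true gradient at $x_0$ has magnitude $\Theta(\alpha)$. The key step is choosing the two functions so that their minimizers lie on opposite sides (or far apart) relative to any possible $x_1$, forcing $\max\{f_1^{(+)}(x_1),f_1^{(-)}(x_1)\}$ to be bounded below by a quadratic term of order $M\cdot(\text{gap})^2 = M\cdot\Theta(\alpha/M)^2=\Theta(\alpha^2/M)$, while $C_{OPT}=\Theta(\alpha/M)$; the ratio is then $\Omega(\alpha)$. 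I would also need to verify $L$-Lipschitzness and the assumption A\ref{asm:a4} ($\alpha<|\nabla_t|$) are satisfiable — this is where the true gradient at $x_0$ being of magnitude slightly above $\alpha$ matters, and it forces the construction to live near the regime $|\nabla_1|\approx\alpha$, which is exactly what makes $C_{OPT}$ small enough.

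The main obstacle I anticipate is \emph{indistinguishability in the presence of the reported function value}: the algorithm also receives $f_1(x_0)$, not just the noisy gradient, so the two candidate functions must agree on $f_1(x_0)$ as well, not merely on the (corrupted) gradient. This restricts the construction — for instance one cannot simply shift a quadratic. A clean way around it is to use functions of the form $f_1(x) = \tfrac{M}{2}(x-c)^2$ for two different centers $c$, which all pass through a common value at $x_0=0$ only if the centers are equal, so instead I would patch two such parabolas to a common pointwise-matched, gradient-mismatched profile near $x_0$ (e.g. translate \emph{and} add a small linear term so that $f_1^{(+)}(0)=f_1^{(-)}(0)$ while $(f_1^{(+)})'(0)\ne (f_1^{(-)})'(0)$), checking convexity, $M$-smoothness, $L$-Lipschitzness, nonnegativity, and $f_1(x_1^*)=0$ throughout. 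Once the two admissible functions with the required minimizer-gap are pinned down, the competitive-ratio lower bound $\Omega(\alpha)$ follows from the elementary two-case estimate above; the detailed constants I would defer to the appendix.
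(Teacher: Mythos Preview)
Your proposal is correct and takes essentially the same approach as the paper, whose proof of Theorem~\ref{Theorem:LB-noisy} is a single sentence deferring to the method of Theorem~\ref{Theorem:LB-fresh}. You have correctly spelled out the natural adaptation: instead of matching both value and gradient at $x_0$ and exploiting second-order freedom (as in Theorem~\ref{Theorem:LB-fresh}), you match only the function value and let the true gradients differ by up to $2\alpha$, which places the minimizers $\Theta(\alpha/M)$ apart; the piecewise linear--quadratic patching you propose to reconcile a common $f_1(x_0)$ with distinct slopes is exactly the construction style of Appendix~\ref{proof:Theorem-2}, and your two-case estimate (algorithm hitting cost $\ge \Theta(\alpha^2/M)$ versus $C_{\opt}\le |x_1^*|=\Theta(\alpha/M)$) yields the claimed $\Omega(\alpha)$ ratio.
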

	% We will analyze this case in the fresh gradient setting which can be extended to stale gradients using the method in previous section.\\ 
	We obtain Theorem \ref{Theorem:LB-noisy} using the same approach used to derive Theorem \ref{Theorem:LB-fresh} for the fresh gradient setting. 

	\section{Simulations}
	In this section we perform simulations to see the relationship between total cost of \ALGone with respect to $M$ and $L$ in different settings \ref{setting:fresh-grad},\ref{setting:stale-grad} and \ref{setting:noisy-grad}. 
	% [XXX: Describe what algorithms you are considering for comparison, and why?] 
	Let $\mathcal{S} = \{f_1,f_2,f_3,f_4,f_5\}$ be the set of functions given by
	% [XXX: Define set $\cal S$ as the 5 functions written in line as equations, and say $f_t \in \cal S$ is chosen uniformly at random for each $t$. ]
	\begin{align*}
		f_1(x) &= 0.5 M (x - 0.1)^2, \\
		f_2(x) &= 0.3 M (x - 0.3)^2, \\
		f_3(x) &= M (x - 0.2)^2, \\
		f_4(x) &= M (x - 0.8)^2, \\
		f_5(x) &= M (x - 0.4)^2.
	\end{align*}
	For each simulation, we set $T=100$ and $f_t \in \cal S$ is chosen uniformly at random for each $t$. The algorithm in all the settings run on the same sequence of functions for various values of $M, L\in [2,50]$. 
	The total cost incurred is plotted as a function of $M$ and $L$ respectively. Note that for the analysis with respect to $L$, we only consider the cost incurred in the stale and noisy gradient setting, as from our theoretical results, the cost incurred in the fresh gradient setting is independent of $L$. Since $C_{OPT}$ is independent of $M$ and $L$, our analysis provides insights on the growth of $\mu_{\text{A-OBD}}$. The findings, as illustrated in Figure \ref{fig:total_cost-vs-M} and \ref{fig:total_cost-vs-L}, are discussed in detail.
	% and discuss the findings you make in Fig. 1. 
	
	%    We consider $T=100$ and for each $t$, a function $f_t \in \mathcal{S}$ is chosen uniformly at random. 
	% We run the all the algorithms on same order of input functions for different values of $M$ ranging from $[2,50]$ and plot the total cost of the algorithm with respect to $M.$
	\begin{figure}[H]
		\centering
		\includegraphics[width=0.48\textwidth]{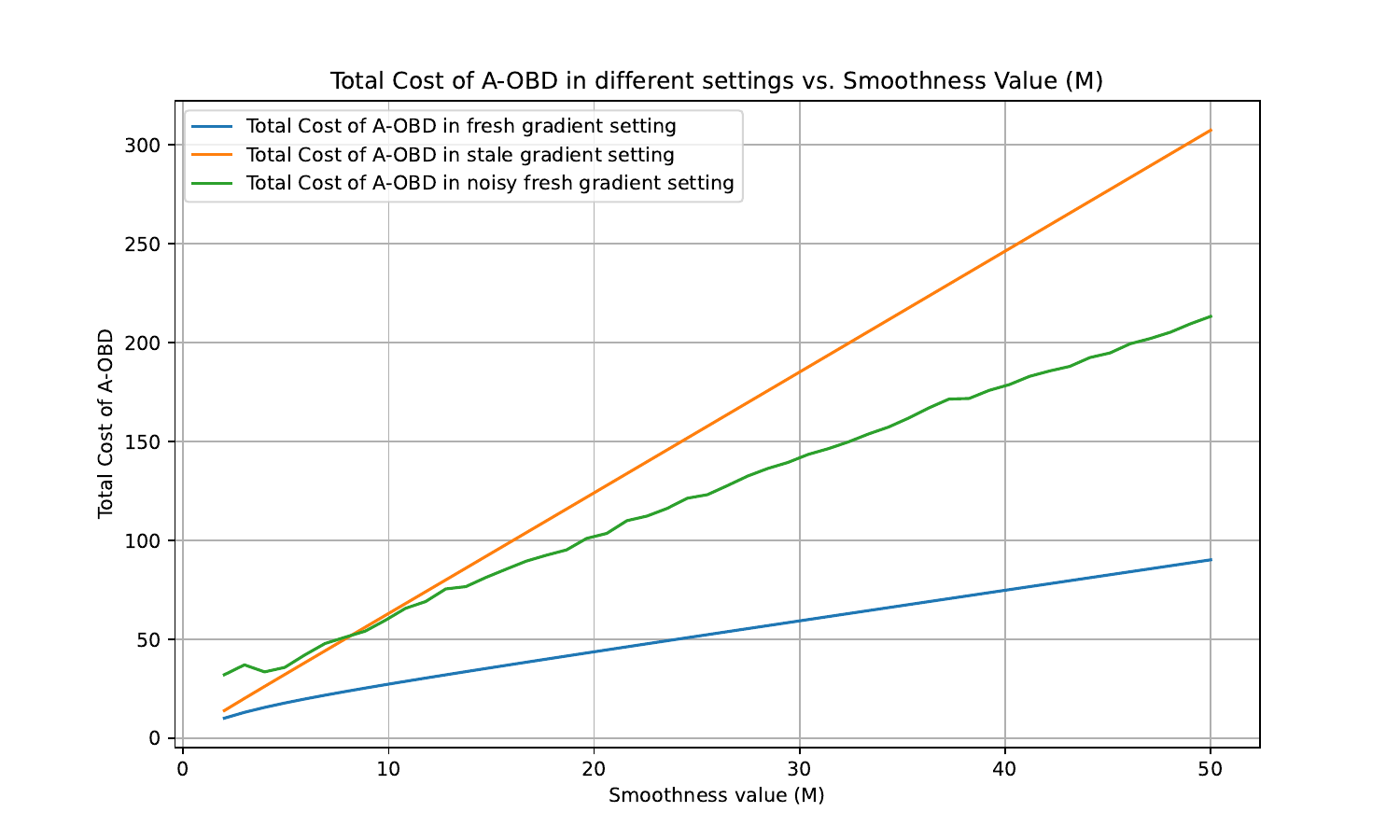}
		\caption{Total cost of A-OBD plotted as a function of $M$ for fresh, stale and noisy gradient settings \ref{setting:fresh-grad},\ref{setting:stale-grad} and \ref{setting:noisy-grad} respectively.}
		\label{fig:total_cost-vs-M}
		% \caption{XXX:Missing, write out what is being simulated under what settings etc.}
	\end{figure}
	
	\begin{figure}[H]
		\centering
		\includegraphics[width=0.48\textwidth]{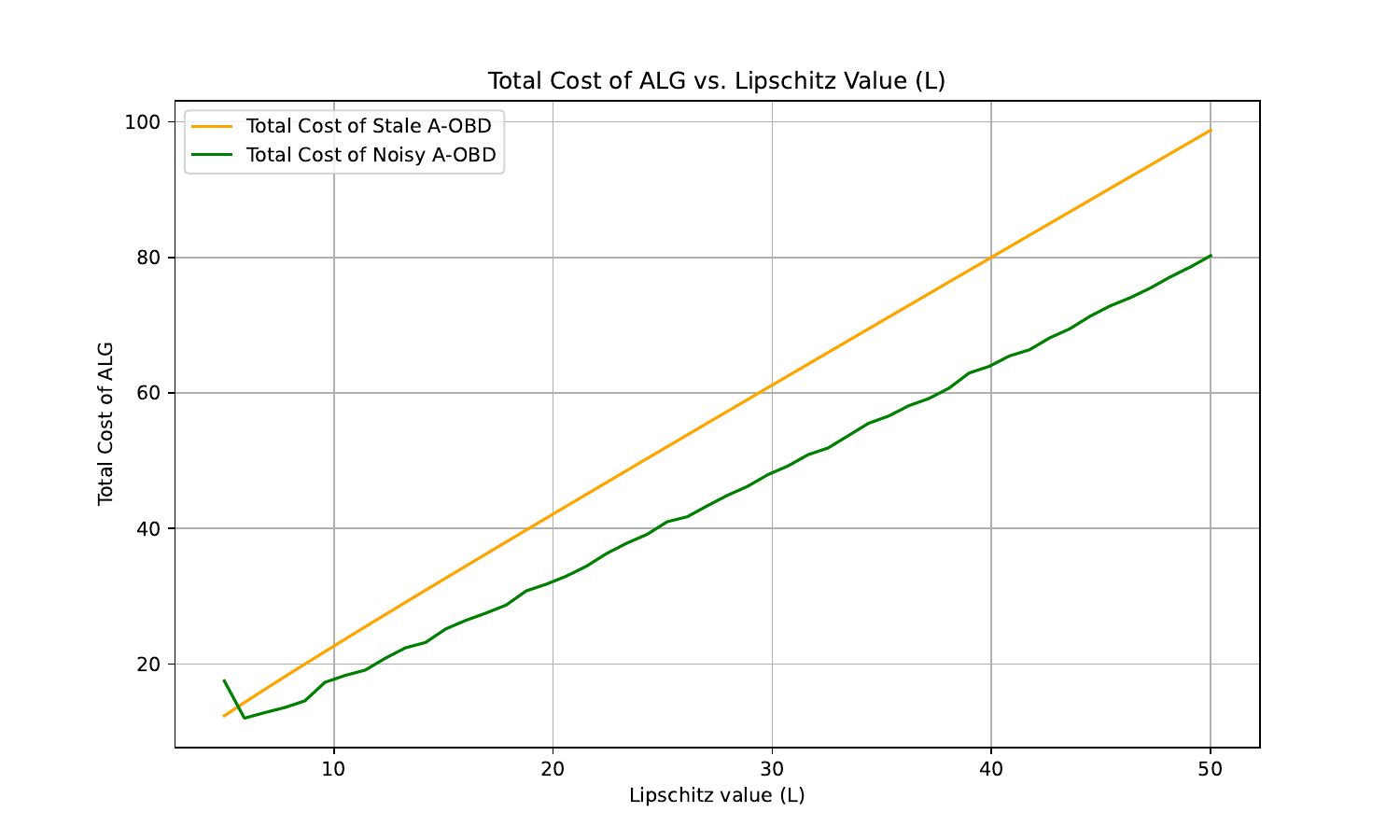}
		\caption{Total cost of A-OBD plotted as a function of $L$ for stale and noisy gradient settings \ref{setting:stale-grad} and \ref{setting:noisy-grad} respectively.}
		\label{fig:total_cost-vs-L}
		% \caption{XXX:Missing, write out what is being simulated under what settings etc.}
	\end{figure}
	
	\textbf{Discussion: }Simulation results indicate that the total cost of A-OBD $C_{\text{A-OBD}}$ scales linearly with $M$ in the fresh gradient setting \ref{setting:fresh-grad}, and linearly with both $M$ and $L$ in the stale gradient setting \ref{setting:stale-grad}. In the noisy gradient setting \ref{setting:noisy-grad}, although the total cost exhibits an overall linear trend with respect to $M$ and $L$, the noise in gradient values introduces slight perturbations, preventing the plot from being a perfect straight line. Since $C_{OPT}$ is independent of $M$ and $L$, these findings imply that the competitive ratio $\mu_{\text{A-OBD}}$ follows the same trend as $C_{\text{A-OBD}}$. Consequently, the simulations validate our theoretical results that in the frugal information setting the competitive ratio scales as $\Theta \left(max\{M,L\}\right)$.
	
	% From the simulations, we notice that the total cost of A-OBD grows linearly with $M$ in the fresh gradient setting \ref{setting:fresh-grad} and linearly in both $M, L$ in the stale and noisy gradient settings \ref{setting:stale-grad}, \ref{setting:noisy-grad} respectively. These results imply that $\mu_{\text{A-OBD}}$ follows the same trend. Thus, the simulations prove that the theoretical results obtain are valid and the competitive ratio in the frugal information setting scales $\Theta(max\{M,L\}$).
	
	%
	%\bibliography{References}
	
	\section{Conclusions}
	In this paper, we have expanded the horizon of online convex optimization with switching cost (OCO-S) with $\sfd=1$ under the frugal information setting, when only a single gradient evaluation is available. 
	Compared to prior work, where OCO-S has been considered under full information, i.e. full function over the full domain is known, 
	frugal information is almost a negligible requirement that is very useful from a practical point of view. Thus necessitating  the characterization of the  performance of online algorithms under the frugal setting. Compared to the full information setting where the competitive ratio is a constant, with frugal information setting, we show that the competitive ratio scales $\Theta(\max\{M,L\})$, where $M$ is the smoothness and Lipschitz parameter. Thus, frugal information does lead to some degradation in performance, however, that is limited by the parameters of the problem. We further studied the noisy gradient case in the frugal information setting, where the only available gradient evaluation is potentially noisy, and characterized that the competitive ratio of any online algorithm scales at least linearly in the maximum error and derived an online algorithm whose competitive ratio scales at most quadratically in the maximum error.
	
	\appendices
	\section{Proof of Theorem~\ref{Theorem:UB-fresh}} \label{proof:theorem-1}
	The proof is a simple modification of method used by \cite{bansal2015} to derive upper bounds in this setting. Let $x_t$ and $x_t^{OPT}$ represent the actions chosen by A-OBD$(\delta_1)$ and OPT, respectively, at time $t$. Define the potential function as $\phi(x_t,x_t^{OPT}):=\gamma|x_t-x_t^{OPT}|$.   
	Define $\mathcal{H}_t:=f_t(x_t)$ and $\mathcal{H}_t^{OPT}:=f_t(x_t^{OPT})$, $\mathcal{M}_t:=|x_t-x_{t-1}|$ and $\mathcal{M}_t^{OPT}:=|x_t^{OPT}-x_{t-1}^{OPT}|$, and $\Delta\phi_t:=\phi(x_t,x_t^{OPT})-\phi(x_{t-1}, x_{t-1}^{OPT})$.\\	
	By applying the triangle inequality, the following two inequalities are obtained.
	\begin{equation}
		\begin{gathered}
			\begin{aligned}
				\phi(x_{t-1},x_{t}^{OPT})-\phi(x_{t-1},x_{t-1}^{OPT}) \leq \gamma|x_t^{OPT}-x_{t-1}^{OPT}|
				\label{ineq:i1}
			\end{aligned}
		\end{gathered}
	\end{equation}
	
	\begin{equation}
		\begin{gathered}
			\begin{aligned}
				\phi(x_{t},x_{t}^{OPT})-\phi(x_{t-1},x_{t}^{OPT}) \leq \gamma|x_t-x_{t-1}|
				\label{ineq:i2}
			\end{aligned}
		\end{gathered}
	\end{equation}
	Combining inequalities \eqref{ineq:i1} and \eqref{ineq:i2}, we derive the following bound for $\Delta \phi_t$.
	\begin{equation}
		\begin{gathered}
			\begin{aligned}
				\Delta \phi_t \leq \gamma \mathcal{M}_t + \gamma \mathcal{M}_t^{OPT}
				\label{ineq:delphi_gen}
			\end{aligned}
		\end{gathered}
	\end{equation}
	For the case where $\mathcal{H}_t > \mathcal{H}_t^{OPT}$, inequality \eqref{ineq:i2} can be refined, subsequently improving \eqref{ineq:delphi_gen}. By assumption A~\ref{asm:a1}, $f_t(x^{*}_t)=0$ which implies, $x_t \in [x_{t-1},x^*_t].$ Since $f_t$ is convex and $x_t^*$ is its minimizer, it follows that $f_t(x_{t-1}) \geq f_t(x_t).$ Using continuity of $f_t$ and $f_t(x_{t-1}) \geq f_t(x_t) \geq f_t(x_t^{OPT}),$ we get that $x_t \in \left[x_{t-1}, x_t^{OPT}\right]$. This implies that  $\phi(x_{t},x_{t}^{OPT})-\phi(x_{t-1},x_{t}^{OPT}) \leq -\gamma|x_t-x_{t-1}|.$ Using this refined bound, we get the following lemma, equivalent to inequality \eqref{ineq:delphi_gen}.
%	\hs{Write above more precisely without using 'cross' and 'in between'}
	\begin{lemma}\label{lemma:delphi2}
		At time step $t$, when $\mathcal{H}_t>\mathcal{H}_t^{OPT}$, we have $\Delta \phi_t \leq \gamma \mathcal{M}_t^{OPT}-\gamma \mathcal{M}_t$
	\end{lemma}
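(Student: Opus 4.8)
The plan is to refine inequality \eqref{ineq:i2} in the regime $\mathcal{H}_t > \mathcal{H}_t^{OPT}$ by establishing that $x_t$ lies in the closed interval $[x_{t-1}, x_t^{OPT}]$, which forces the movement of the potential toward $x_t^{OPT}$ to actually decrease the potential by $\gamma|x_t - x_{t-1}|$ rather than increase it. Concretely, the key geometric claim is the chain of containments $x_t \in [x_{t-1}, x_t^*] \subseteq [x_{t-1}, x_t^{OPT}]$ where $x_t^*$ is the minimizer of $f_t$ on $[0,1]$.

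\textbf{Step 1: Locate $x_t$ relative to $x_{t-1}$ and $x_t^*$.} I would first argue that A-OBD never overshoots the minimizer: the algorithm picks $x_t$ as the intersection of the gradient line $L_1$ at $x_{t-1}$ with the line $L_2$ of slope $\sign(-\nabla^{avl}_t)\delta_t$ through $x_{t-1}$. Because $L_2$ is chosen with slope of opposite sign to the gradient $\nabla^{avl}_t = \nabla f_t(x_{t-1})$ and passing through the $x$-axis at $x_{t-1}$, the intersection point moves from $x_{t-1}$ in the descent direction of $f_t$ but, since $L_1$ underestimates $f_t$ (first-order approximation of a convex function) and $L_1$ is zero only past $x_t^*$ would require crossing the minimizer, the step stops before reaching $x_t^*$. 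Hence $x_t \in [x_{t-1}, x_t^*]$ (interpreting the interval with the appropriate orientation). This uses convexity of $f_t$ and $f_t(x_t^*) = 0$ from A\ref{asm:a1}.

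\textbf{Step 2: Show $x_t^*$ is between $x_{t-1}$ and $x_t^{OPT}$.} Since $x_t \in [x_{t-1}, x_t^*]$ and $f_t$ is convex with minimizer $x_t^*$, the value $f_t$ is monotone on $[x_{t-1}, x_t^*]$, so $f_t(x_{t-1}) \ge f_t(x_t)$. Combining with the hypothesis $\mathcal{H}_t = f_t(x_t) > \mathcal{H}_t^{OPT} = f_t(x_t^{OPT})$ gives $f_t(x_{t-1}) \ge f_t(x_t) \ge f_t(x_t^{OPT})$. By the intermediate value theorem / continuity of $f_t$ and the unimodality of a convex function, the sublevel set $\{x : f_t(x) \le f_t(x_t)\}$ is an interval containing both $x_t$ and $x_t^{OPT}$ and having $x_t^*$ in its interior; since $x_{t-1}$ is on the far side (because $f_t(x_{t-1}) \ge f_t(x_t)$), $x_t$ lies between $x_{t-1}$ and $x_t^{OPT}$. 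Therefore $x_t \in [x_{t-1}, x_t^{OPT}]$.

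\textbf{Step 3: Conclude.} With $x_t \in [x_{t-1}, x_t^{OPT}]$, moving the first argument of $\phi$ from $x_{t-1}$ to $x_t$ strictly decreases the distance to $x_t^{OPT}$, so $\phi(x_t, x_t^{OPT}) - \phi(x_{t-1}, x_t^{OPT}) = -\gamma|x_t - x_{t-1}| = -\gamma \mathcal{M}_t$. Adding this to \eqref{ineq:i1} yields $\Delta\phi_t \le \gamma \mathcal{M}_t^{OPT} - \gamma\mathcal{M}_t$, which is exactly Lemma~\ref{lemma:delphi2}. The main obstacle I anticipate is Step 1 — carefully verifying that the intersection point defined by the two lines $L_1, L_2$ cannot cross the minimizer $x_t^*$, since this relies on the sign conventions in the algorithm and the fact that the gradient line lies below the graph of $f_t$; all the orientation/sign bookkeeping (which side of $x_{t-1}$ the minimizer lies on, and that $\delta_t > 0$) has to be handled without loss of generality, and the degenerate case $\nabla^{avl}_t = 0$ (where $x_t = x_{t-1}$ and the claim is trivial) should be noted separately.
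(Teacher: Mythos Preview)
Your proposal is correct and follows essentially the same route as the paper: establish $x_t \in [x_{t-1}, x_t^*]$ from the algorithm's construction together with $f_t(x_t^*)=0$, then use convexity and the hypothesis $f_t(x_t) > f_t(x_t^{OPT})$ (so $f_t(x_{t-1})\ge f_t(x_t)\ge f_t(x_t^{OPT})$) to conclude $x_t \in [x_{t-1}, x_t^{OPT}]$, which refines \eqref{ineq:i2} to $\phi(x_t,x_t^{OPT})-\phi(x_{t-1},x_t^{OPT})\le -\gamma|x_t-x_{t-1}|$ and, combined with \eqref{ineq:i1}, gives the lemma. The paper's write-up is terser but structurally identical; your concern about Step~1 is not a real obstacle, as the paper dispatches it in one line via the same observation that $L_1$ lies below $f_t$ and hence is nonpositive at $x_t^*$.
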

	Now we bound $\mathcal{H}_t$ from both directions in terms of $\mathcal{M}_t$ in the following lemmas.
	\begin{lemma}\label{lemma:lb-cost}
		Under the assumption A\ref{asm:a1}. in the fresh gradient setting (a), we have $\mathcal{H}_t \geq \delta \mathcal{M}_t$.
	\end{lemma}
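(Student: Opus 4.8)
\textbf{Proof plan for Lemma~\ref{lemma:lb-cost}.}
The plan is to exploit the fact that A-OBD chooses $x_t$ as the intersection of $L_1$ and $L_2$, so that $L_2(x_t) = \delta_t \cdot \sign(-\nabla_t^{avl}) \cdot (x_t - x_{t-1}) = \delta_t |x_t - x_{t-1}| = \delta_t \mathcal{M}_t$, where the middle equality holds because $x_t$ always lies on the side of $x_{t-1}$ in the direction of $-\nabla_t^{avl}$ (the descent direction), which makes $\sign(-\nabla_t^{avl})(x_t-x_{t-1}) = |x_t - x_{t-1}| \ge 0$. So the whole point reduces to showing that the true hitting cost dominates the value of the line $L_2$ at $x_t$, i.e. $\mathcal{H}_t = f_t(x_t) \ge L_2(x_t) = \delta_t \mathcal{M}_t$.

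First I would observe that, in the fresh setting, $f^{avl}_t = f_t(x_{t-1})$ and $\nabla^{avl}_t = \nabla f_t(x_{t-1})$, so $L_1$ is the exact tangent line to the convex function $f_t$ at $x_{t-1}$. By convexity, $f_t$ lies above its tangent line everywhere: $f_t(x) \ge L_1(x)$ for all $x$, and in particular $f_t(x_t) \ge L_1(x_t)$. Since $x_t$ is defined by $L_1(x_t) = L_2(x_t)$, this immediately gives $\mathcal{H}_t = f_t(x_t) \ge L_1(x_t) = L_2(x_t) = \delta_t \mathcal{M}_t$, which is exactly the claim (with $\delta = \delta_t = \hat\delta$ as in Theorem~\ref{Theorem:UB-fresh}).

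The one point that needs care — and is the only real obstacle — is verifying that $x_t$ genuinely lies in the descent direction from $x_{t-1}$, so that the signed quantity equals the absolute value $\mathcal{M}_t$ and $L_2(x_t) \ge 0$. This follows from a short case analysis on the sign of $\nabla_t^{avl}$: the two lines $L_1$ (slope $\nabla_t^{avl}$, value $f^{avl}_t \ge 0$ at $x_{t-1}$) and $L_2$ (slope $-\sign(\nabla_t^{avl})\,\delta_t$, value $0$ at $x_{t-1}$) leave $x_{t-1}$ with opposite-signed slopes whose difference in slope is $|\nabla_t^{avl}| + \delta_t > 0$; since $L_1$ starts above $L_2$ at $x_{t-1}$, they meet at a point $x_t$ on the side where $L_2$ is increasing, i.e. where $\sign(-\nabla_t^{avl})(x_t - x_{t-1}) \ge 0$. (If $\nabla_t^{avl} = 0$ then $x_t = x_{t-1}$ only if $f^{avl}_t = 0$; otherwise the lines are parallel and one treats this degenerate case separately, or rules it out since $x_{t-1}$ is then already optimal.) Once this geometric fact is in hand the inequality is immediate, so the bulk of the write-up is just this sign bookkeeping rather than any estimation.
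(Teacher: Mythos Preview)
Your proof is correct and follows essentially the same argument as the paper: convexity of $f_t$ places $L_1$ below $f_t$, so at the intersection point $x_t$ one has $f_t(x_t)\ge L_1(x_t)=L_2(x_t)=\delta\mathcal{M}_t$. The paper's write-up is terser and simply identifies $L_2$ with $y=\delta|x-x_{t-1}|$ without spelling out the sign bookkeeping you carefully address, so your version is if anything more complete.
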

	\begin{proof}
		Since $f_t$ is convex, the gradient line $L_1$ lies below the function $f_t$, i.e. $L_1(x)\le f_t(x)$ for all $x$. Consequently, the intersection of $L_1$ with $y=\delta|x-x_{t-1}|$ is similarly bounded above by $f_t(x_t)$. Hence $f_t(x_t) \geq \delta|x_t-x_{t-1}|=\delta \mathcal{M}_t.$
	\end{proof}
	\begin{lemma}\label{lemma:ub-cost}
		Under the assumption A\ref{asm:a1}. in setting (a), we have $\mathcal{H}_t \leq (\delta+M/2)\mathcal{M}_t$
	\end{lemma}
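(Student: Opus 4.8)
The plan is to exploit the $M$-smoothness of $f_t$, which gives a \emph{quadratic upper bound} on $f_t$ that complements the linear lower bound used in Lemma~\ref{lemma:lb-cost}. Recall that A-OBD chooses $x_t$ as the intersection of the gradient line $L_1(x) = f^{avl}_t + \nabla^{avl}_t\cdot(x-x_{t-1})$ and the line $L_2(x) = \delta_t\,\sign(-\nabla^{avl}_t)\,(x-x_{t-1})$, and in the fresh setting $f^{avl}_t = f_t(x_{t-1})$, $\nabla^{avl}_t = \nabla f_t(x_{t-1})$. By $M$-smoothness,
\begin{equation}
	f_t(x_t) \leq f_t(x_{t-1}) + \nabla f_t(x_{t-1})\cdot(x_t - x_{t-1}) + \frac{M}{2}(x_t - x_{t-1})^2 = L_1(x_t) + \frac{M}{2}\mathcal{M}_t^2 . \nonumber
\end{equation}
Since $x_t$ lies on $L_2$, we have $L_1(x_t) = L_2(x_t) = \delta_t\,|x_t - x_{t-1}| = \delta_t \mathcal{M}_t$ (the $\sign$ factor makes $L_2(x_t)$ nonnegative and equal to $\delta_t$ times the absolute displacement). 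Hence $f_t(x_t) \leq \delta_t \mathcal{M}_t + \frac{M}{2}\mathcal{M}_t^2$.

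The remaining step is to absorb the quadratic term into a linear one, i.e. to argue $\frac{M}{2}\mathcal{M}_t^2 \leq \frac{M}{2}\mathcal{M}_t$, which holds provided $\mathcal{M}_t = |x_t - x_{t-1}| \leq 1$. This should follow from the domain restriction: both $x_t$ and $x_{t-1}$ lie in $[0,1]$ (the algorithm moves toward the minimizer $x_t^\ast \in [0,1]$, and as noted in the proof of Theorem~\ref{Theorem:UB-fresh}, $x_t \in [x_{t-1}, x_t^\ast]$, so it never leaves the interval), whence $\mathcal{M}_t \le 1$. Combining, $\mathcal{H}_t \leq \delta_t \mathcal{M}_t + \frac{M}{2}\mathcal{M}_t = (\delta_t + M/2)\mathcal{M}_t$, as claimed (writing $\delta$ for $\delta_t$).

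The only real subtlety — the step I expect to need the most care — is justifying $\mathcal{M}_t \le 1$ rigorously, i.e. confirming that A-OBD's iterate never steps outside $[0,1]$. This reduces to checking that the intersection point of $L_1$ and $L_2$ lies between $x_{t-1}$ and $x_t^\ast$: the gradient line $L_1$ has value $f_t(x_{t-1}) \ge 0$ at $x_{t-1}$ and, being a subgradient line of the convex function $f_t$, crosses zero at or before $x_t^\ast$; meanwhile $L_2$ starts at $0$ at $x_{t-1}$ and increases in magnitude moving toward $x_t^\ast$, so the two meet strictly between $x_{t-1}$ and the zero-crossing of $L_1$, hence inside $[0,1]$. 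Once this containment is in hand, the lemma is immediate from smoothness.
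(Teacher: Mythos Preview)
Your proposal is correct and follows essentially the same approach as the paper: apply $M$-smoothness to bound $f_t(x_t)$ by $L_1(x_t)+\tfrac{M}{2}\mathcal{M}_t^2$, identify $L_1(x_t)=L_2(x_t)=\delta\mathcal{M}_t$ at the intersection, and then use $\mathcal{M}_t\le 1$ to reduce the quadratic term to a linear one. The paper simply asserts $|x_t-x_{t-1}|\le 1$ for the last step (relying on the domain $[0,1]$), whereas you go a little further in arguing that the intersection point stays in $[x_{t-1},x_t^\ast]\subset[0,1]$; this extra care is fine but not strictly needed beyond what the paper does.
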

	\begin{customproof}[Lemma \ref{lemma:ub-cost}]
		Using the $M$-smoothness property of $f_t$ we have $f_t(x_t) - \left\{ f_t(x_{t-1}) + \left\langle \nabla f_t(x_{t-1}), (x_t - x_{t-1}) \right\rangle \right\}  \leq \frac{M}{2} |x_t - x_{t-1}|^2$. 
		\begin{equation}\nonumber
			\begin{aligned}
				\Rightarrow \quad & \mathcal{H}_t - \delta \mathcal{M}_t \stackrel{(a)}{\leq} \frac{M}{2} |x_t - x_{t-1}|^2 \stackrel{(b)}{\leq} \frac{M}{2} |x_t - x_{t-1}| \\ 
				%\Rightarrow \quad & \mathcal{H}_t - \delta \mathcal{M}_t \stackrel{(b)}{\leq} \frac{M}{2} |x_t - x_{t-1}| \\  
				\Rightarrow \quad & \mathcal{H}_t \leq \left(\delta + \frac{M}{2} \right) \mathcal{M}_t
			\end{aligned}
		\end{equation}
		where (a) follows since $f_t(x_{t-1})+\left <\nabla f_t\left(x_{t-1}\right),\left(x_t-x_{t-1}\right)\right>$ is the value of the gradient line evaluated at the point where it intersects $y=\delta|x-x_{t-1}|$, which is equal to $\delta|x_t-x_{t-1}|=\delta \mathcal{M}_t$, while (b) follows since $|x_t-x_{t-1}| \leq 1.$\\
		This completes the proof of Lemma \ref{lemma:ub-cost}.
	\end{customproof}
	
	%At time $t$, the approach is to bound the hitting cost $\mathcal{H}_t$ of the function $f_t$ in terms of the movement cost $\mathcal{M}_t$. Following this, we would use the existing method of \cite{bansal2015} to upper bound the competitive ratio. The analysis of setting \ref{setting:fresh-grad} is crucial, while other settings are minor modifications of \ref{setting:fresh-grad}.
	
	Using Lemmas \ref{lemma:lb-cost} and \ref{lemma:ub-cost}, we will derive an upper bound for the term \( \mathcal{H}_t + \mathcal{M}_t + \Delta \phi_t \), which aids in establishing an upper bound on the competitive ratio. We analyze this bound in two cases based on the relationship between the hitting cost of the algorithm and the optimum. 
	
	\textbf{Case 1: $\mathcal{H}_t>\mathcal{H}_t^{OPT}$}
	\begin{equation}\nonumber
		\begin{gathered}
			\begin{aligned}
				\mathcal{H}_t+\mathcal{M}_t+\Delta \phi_t &\stackrel{(a)}\leq \left(\delta + \frac{M}{2}\right)\mathcal{M}_t+\mathcal{M}_t+\Delta \phi_t\\
				&\stackrel{(b)}\leq  \left(\delta + \frac{M}{2}+1\right)\mathcal{M}_t - \gamma \mathcal{M}_t + \gamma \mathcal{M}_t^{OPT}\\
				&\stackrel{(c)}\leq \gamma \left(\mathcal{M}_t^{OPT}+\mathcal{H}_t^{OPT}\right) 
			\end{aligned}
		\end{gathered}
	\end{equation}
	where (a) follows from Lemma \ref{lemma:ub-cost}, (b) uses Lemma \ref{lemma:delphi2}, and (c) holds by setting \( \gamma = \delta + \frac{M}{2} + 1 \) and adding an additional \( \gamma \mathcal{H}_t^{OPT} \) term.
	
	\textbf{Case 2: $\mathcal{H}_t \leq \mathcal{H}_t^{OPT}$}
	\begin{equation}\nonumber
		\begin{gathered}
			\begin{aligned}
				\mathcal{H}_t+\mathcal{M}_t+\Delta \phi_t &\stackrel{(d)}\leq \mathcal{H}_t + \frac{\mathcal{H}_t}{\delta}+\Delta \phi_t\\
				&\stackrel{(e)}\leq \mathcal{H}_t \left(1+\frac{1}{\delta}\right)+\gamma \mathcal{M}_t + \gamma \mathcal{M}_t^{OPT}\\
				&\stackrel{(f)}\leq \mathcal{H}_t^{OPT}\left(1+\frac{1}{\delta}+\frac{\gamma}{\delta}\right)+\gamma \mathcal{M}_t^{OPT}\\
				&\leq \max \left\{\left(1+\frac{1}{\delta}+\frac{\gamma}{\delta}\right),\gamma\right\}\left(\mathcal{H}_t^{OPT}+\mathcal{M}_t^{OPT}\right)
			\end{aligned}
		\end{gathered}
	\end{equation}
	where (d) follows from Lemma \ref{lemma:lb-cost}, (e) is derived using \eqref{ineq:delphi_gen} obtained above, and (f) holds because \( \mathcal{H}_t \leq \mathcal{H}_t^{OPT} \).\\
	Combining the bounds derived for both cases, we have
	\begin{equation}\nonumber
		\begin{gathered}
			\begin{aligned}
				\mathcal{H}_t + \mathcal{M}_t + \Delta \phi_t \leq \max\left\{\delta + 1 + \frac{M}{2}, 2 + \frac{2}{\delta} + \frac{M}{2\delta}\right\}(\mathcal{H}_t^{OPT} + \mathcal{M}_t^{OPT}). \label{eq:ub-fresh}
			\end{aligned}
		\end{gathered}
	\end{equation} 
	Finally, noting that \( \sum_{t=1}^T \Delta \phi_t \geq 0 \), we establish an upper bound on the competitive ratio as follows. $C_{\text{A-OBD}(\delta_1)}$
	\begin{equation}\nonumber
		\begin{gathered}
			\begin{aligned}
				&= \sum_{t=1}^T \left(\mathcal{H}_t + \mathcal{M}_t\right) \\
				&\leq \sum_{t=1}^T \left(\mathcal{H}_t + \mathcal{M}_t + \Delta \phi_t\right) \\
				&\stackrel{\eqref{eq:ub-fresh}}\leq \sum_{t=1}^T \max\left\{\delta + 1 + \frac{M}{2}, 2 + \frac{2}{\delta} + \frac{M}{2\delta}\right\} \left(\mathcal{H}_t^{OPT} + \mathcal{M}_t^{OPT}\right) \\
				&= \max\left\{\delta + 1 + \frac{M}{2}, 2 + \frac{2}{\delta} + \frac{M}{2\delta}\right\} \sum_{t=1}^T \left(\mathcal{H}_t^{OPT} + \mathcal{M}_t^{OPT}\right) \\
				&= \max\left\{\delta + 1 + \frac{M}{2}, 2 + \frac{2}{\delta} + \frac{M}{2\delta}\right\} C_{OPT}.
			\end{aligned}\
		\end{gathered}
	\end{equation}
	
	Hence, the competitive ratio is bounded by
	\begin{align*}
		\mu_{\text{A-OBD}(\delta_1)} \leq \max\left\{\delta + 1 + \frac{M}{2}, 2 + \frac{2}{\delta} + \frac{M}{2\delta}\right\}.
	\end{align*}
	We can now obtain an optimal choice of  $\delta$  to minimize the derived upper bound. Choosing  $\delta = {\hat \delta} = \frac{1}{2} \sqrt{\left(\frac{M}{2} + 1\right)^2 + 7} - \frac{M}{4} + \frac{1}{2}$ gives us the following result.
	\begin{equation}\nonumber
		\begin{gathered}
			\begin{aligned}
				\mu_{\text{A-OBD}( {\hat \delta} )} \leq \frac{M}{4}+\frac{3}{2}+\frac{1}{2}\sqrt{\left(\frac{M}{2}+1\right)^2+7}.
			\end{aligned}
		\end{gathered}
	\end{equation}
	This completes the proof of Theorem \ref{Theorem:UB-fresh}.
	
	\section{Proof of Theorem~\ref{Theorem:LB-fresh}} \label{proof:Theorem-2}
	
%	Define the set of functions, \textit{ALG-Id($f^{\text{value}},\nabla^{\text{value}}$)}, that have the same function evaluation ($f^{\text{value}}$) and gradient ($\nabla^{\text{value}}$) values at \( x_0 \).
%	By carefully designing these functions, we demonstrate that the lower bound on the performance of any algorithm is at least \(\Omega(M)\) showing optimality of A-OBD{$(\delta_1)$}. The key idea used to prove Theorem~\ref{Theorem:LB-fresh} is to adversarially select different functions from the set \textit{ALG-Id} according to the action of the online algorithm. This construction ensures that no algorithm can surpass the derived lower bound, as the fundamental challenge lies in distinguishing between the functions using only frugal information.
	Consider the OCO-LS problem instance in the fresh gradient setting where $T=1, x_0=0$ and the received information to an online algorithm is $f_1(0)=\frac{\nabla}{M},\nabla f_1(0)=\nabla$ where $\nabla>3.$ Consider the following functions:
	\begin{equation*} \label{eq:lb-functions}
		\begin{aligned}
			g_1(x) &= 
			\begin{cases} 
				-\nabla x + \frac{\nabla^2}{M}, & \text{if } x \leq \frac{\nabla}{2M}, \\[10pt]
				\frac{M}{2} \left( x - \frac{1.5 \nabla}{M} \right)^2, & \text{otherwise}.
			\end{cases} \\[15pt]
			g_2(x) &= 
			\begin{cases} 
				\frac{M}{2} \left( x - \frac{\nabla}{M} \right)^2 + \frac{\nabla^2}{2M}, & \text{if } x \leq \frac{\nabla}{2M}, \\[10pt]
				\frac{\nabla}{2} \left( \frac{7 \nabla}{4M} - x \right), & \text{if } \frac{\nabla}{2M} \leq x \leq \frac{3 \nabla}{2M}, \\[10pt]
				\frac{M}{2} \left( x - \frac{2 \nabla}{M} \right)^2, & \text{otherwise}.
			\end{cases} \\[15pt]
			g_3(x) &= 
			\begin{cases} 
				\frac{M}{2} \left( x - \frac{\nabla}{M} \right)^2 + \frac{\nabla^2}{2M}, & \text{if } x \leq \frac{2 \nabla}{3M}, \\[10pt]
				\frac{\nabla}{3} \left( \frac{7 \nabla}{3M} - x \right), & \text{if } \frac{2 \nabla}{3M} \leq x \leq \frac{13 \nabla}{6M}, \\[10pt]
				\frac{M}{2} \left( x - \frac{2.5 \nabla}{M} \right)^2, & \text{otherwise}.
			\end{cases}
		\end{aligned}
	\end{equation*}
%	\begin{figure}
%		\centering
%		\includegraphics[width=0.5\textwidth]{General-LB}
%		\caption{set of functions $\mathcal{G}$}
%		\label{fig:General_LB}
%	\end{figure}
	
%	These functions are composition of quadratic, linear and quadratic component except the first one which just is linear and quadratic. Note these functions lie in set \textit{ALG-Id($f^{\text{value}},\nabla^{\text{value}}$)} with $f^{\text{value}}=\frac{\nabla^2}{M}$ and $\nabla^{\text{value}}=-\nabla.$
%	
%	Each function ultimately [XXX: not clear what ultimately means here] ends with a simple quadratic term of the form $\frac{M}{2}(x-c)^2$ with different $c$ values. Consider $T=1$. The adversary aims to input functions $g_1, g_2, g_3$ [XXX: at time $1$?] such that output [XXX: $x_1$?] of \ALG lies [XXX: not clear what lies means here] on the linear part of these functions or to the farther side of the minima. Keep $\nabla>3$ to ensure that the action of OPT will lie in the ending [XXX: no idea what that means, use variable $x_1^{OPT}$ and say where this will be] quadratic segment. 
	
	Let $\mathcal{G}=\{g_1,g_2,g_3\},$ which is a set of $M$-smooth functions that satisfy \( g_i(x_0) = g_j(x_0) \) and \( g_i'(x_0) = g_j'(x_0) \ \forall \ i,j \in \{1,2,3\} \). 
	%Essentially, any online algorithm \ALG cannot differentiate between functions from the set $\mathcal{G}.$ Exploiting this advantage, functions can be chosen from $\mathcal{G}$ adversarially for each action($x_1$) of \ALG.
	
	With $T=1$, $C_{OPT}(g_1)=\frac{3\nabla}{2M}-\frac{1}{2M},C_{OPT}(g_2)=\frac{2\nabla}{M}-\frac{1}{2M}$, $$C_{OPT}(g_3)=\frac{5\nabla}{2M}-\frac{1}{2M},$$ respectively. Next, we will divide the number line into the following intervals: $${\cal I}_1=\left[0,\frac{\nabla}{2M}\right],{\cal I}_2=\left[\frac{\nabla}{2M},\frac{3\nabla}{2M}\right],$$ $$ {\cal I}_3=\left[\frac{3\nabla}{2M},\frac{13\nabla}{6M}\right],{\cal I}_4=\left[\frac{13\nabla}{6M},1\right].$$
	
	Let $x_1$ be the action chosen by any online algorithm $\cA$ for time $1$. Then if $x_1 \in \{{\cal I}_1,{\cal I}_2,{\cal I}_3,{\cal I}_4\}$, $f_1$ is selected as \( \{g_1, g_2, g_3, g_1 \} \), respectively. This implies that
	%For each interval $\mathcal{I}_i \ \forall \ i$, calculate action $x_1 \in \mathcal{I}_i$ with minimum total cost to provide a lower bound on $C_\mathcal{A}$ for any $\cA$. Using these lower bound on the total cost we get respective lower bounds on the competitive ratio for each function	
	$\mu_{\cA}(g_1)=\frac{\nabla}{3}, \mu_{\cA}(g_2)= \frac{\nabla}{16}, \mu_{\cA}(g_3)= \frac{\nabla}{45}, \mu_{\cA}(g_1) =\frac{4\nabla}{27}$.

	Since by definition, $\mu_\cA$ is the maximum over all possible inputs, we get $\mu_{\cA}\ge  \frac{\nabla}{45} $
	%From this, we can conclude that the lower bound for the performance of any \ALG in this setting is at least \( \frac{\nabla}{45} \). 
	For our specific setting, we require that the minimizer of loss functions to lie within the interval \( [0, 1] \). To ensure this, we must have $\frac{5\nabla}{2M} \leq 1,$ which implies that we can choose the maximum value of $\nabla$ as $\frac{2M}{5}.$ Using this particular value, we obtain the lower bound $\mu_{\mathcal{A}} \geq \frac{2M}{135}$ for any online algorithm $\mathcal{A}.$

	\section{Proof of Theorem~\ref{Theorem:UB-stale}} \label{proof:Theorem-3}
	For a fixed input sequence ${\cal F} = \{f_t\}_{t=1}^T$, let $C_{\mathcal{A}}^{fresh}$ and $C_{\mathcal{A}}^{stale}$ be the costs incurred by the algorithm $\mathcal{A}$ in the fresh and stale gradient setting, 
	respectively.
	% Our proof relies on the relationship between $C_{\mathcal{A}}^{fresh}$ and $C_{\mathcal{A}}^{stale}$. We will first derive this relationship and then proceed to prove the upper bound for \ALGone ($\delta_{2}$) in the stale gradient setting. 
	\begin{lemma}\label{lemma:cost-relation}
		%Let $C_{\mathcal{A}}^{fresh}$ and $C_{\mathcal{A}}^{stale}$  denote the costs incurred by the algorithm  $\mathcal{A}$  in the fresh and stale gradient settings over the same input sequence, respectively. Then, 
		
		\begin{equation}
			\begin{gathered}
				\begin{aligned}
					C_{\mathcal{A}}^{stale} &\leq C_{\mathcal{A}}^{fresh} + \sum_{t=1}^{T} L\mathcal{M}_t = \sum_{t=1}^{T}\left(\mathcal{H}_t + \left(L+1\right)\mathcal{M}_t\right)
				\end{aligned}
			\end{gathered}
		\end{equation}
		where, $L$ is the Lipschitz constant for the function $f_t$ and $\mathcal{H}_t, \mathcal{M}_t$ are the hitting cost and switching cost respectively. 
	\end{lemma}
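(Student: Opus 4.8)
\textbf{Proof proposal for Lemma~\ref{lemma:cost-relation}.}
The plan is to exploit the structural observation stated just above the lemma: for a fixed input sequence $\mathcal{F} = \{f_t\}_{t=1}^T$, the action of A-OBD($\tilde\delta$) in the stale setting at time $t$ coincides with the fresh-setting action at time $t-1$, i.e.\ $x_t^{stale} = x_{t-1}^{fresh}$. This is because in the stale setting $f^{avl}_t$ and $\nabla^{avl}_t$ are the value and gradient of $f_{t-1}$ at $x_{t-1}^{stale}$, and a straightforward induction (anchored at $x_0$, which is common to both settings) shows that feeding the algorithm the delayed information simply reproduces the previous fresh action. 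So I would first state and justify this identity cleanly by induction on $t$.

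Next I would bound the two cost components of the stale run in terms of the fresh run. For the hitting cost: $\mathcal{H}_t^{stale} = f_t(x_t^{stale}) = f_t(x_{t-1}^{fresh})$. I want to compare this with the fresh hitting costs. Since $f_t$ is $L$-Lipschitz (Assumption~A\ref{asm:a2}) and, by convexity together with $f_t(x_t^*)=0$ and the fact that A-OBD moves from $x_{t-1}$ toward $x_t^*$ (the argument used in Lemma~\ref{lemma:delphi2}), the point $x_t^{fresh}$ lies between $x_{t-1}^{fresh}$ and $x_t^*$, so $f_t(x_{t-1}^{fresh}) \le f_t(x_t^{fresh}) + L|x_{t-1}^{fresh} - x_t^{fresh}| = \mathcal{H}_t^{fresh} + L\,\mathcal{M}_t^{fresh}$. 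For the switching cost: $\mathcal{M}_t^{stale} = |x_t^{stale} - x_{t-1}^{stale}| = |x_{t-1}^{fresh} - x_{t-2}^{fresh}| = \mathcal{M}_{t-1}^{fresh}$, so summing over $t$ the total stale switching cost is at most the total fresh switching cost (it is a shifted copy, with one extra term that is zero or can be absorbed). Putting these together,
\begin{equation}
	\begin{gathered}
		\begin{aligned}
			C_{\mathcal{A}}^{stale} = \sum_{t=1}^{T}\!\left(\mathcal{H}_t^{stale} + \mathcal{M}_t^{stale}\right) \le \sum_{t=1}^{T}\!\left(\mathcal{H}_t^{fresh} + L\,\mathcal{M}_t^{fresh} + \mathcal{M}_t^{fresh}\right),
		\end{aligned}
	\end{gathered}
\end{equation}
which is exactly $C_{\mathcal{A}}^{fresh} + \sum_{t=1}^T L\mathcal{M}_t = \sum_{t=1}^T (\mathcal{H}_t + (L+1)\mathcal{M}_t)$ after dropping the fresh/stale superscripts as in the lemma statement.

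The main obstacle I anticipate is handling the boundary terms in the index shift carefully: the identity $x_t^{stale}=x_{t-1}^{fresh}$ introduces an off-by-one in both sums, so I must check that the "extra" fresh term at $t=T$ (appearing on the right) and the "missing" stale term at $t=1$ do not break the inequality — they don't, since all hitting and switching costs are nonnegative and $x_0$ is shared, but it needs to be spelled out. A secondary subtlety is justifying the betweenness claim $x_{t-1}^{fresh}$, $x_t^{fresh}$, $x_t^*$ collinear-ordered in the stale-time hitting-cost bound; this reuses the monotonicity argument already established in the proof of Theorem~\ref{Theorem:UB-fresh} (Lemma~\ref{lemma:delphi2}), so it should go through verbatim. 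Once these bookkeeping points are settled, the lemma follows, and combining it with Theorem~\ref{Theorem:UB-fresh} (replacing $\hat\delta$ by $\tilde\delta$ and $M/2+1$ by $L+M/2+1$ in the potential-function bookkeeping) yields Theorem~\ref{Theorem:UB-stale}.
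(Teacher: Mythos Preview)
Your proposal is correct and follows essentially the same route as the paper: both exploit the identity $x_t^{stale}=x_{t-1}^{fresh}$ and then control $f_t(x_{t-1})-f_t(x_t)$ by $L\lvert x_t-x_{t-1}\rvert$ via Assumption~A\ref{asm:a2}, with the index shift in the switching cost handled by nonnegativity. Note that the betweenness argument you invoke is unnecessary --- the Lipschitz bound alone already yields $f_t(x_{t-1}^{fresh})\le f_t(x_t^{fresh})+L\,\mathcal{M}_t^{fresh}$ --- and the paper simply subtracts the two cost expressions and applies Lipschitz termwise, which is marginally cleaner but amounts to the same computation.
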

	%\begin{customproof}[Lemma \ref{lemma:cost-relation}]
	\begin{proof}
		For a fixed input sequence ${\cal F} =\{f_t\}_{t=1}^T$, let the actions of algorithm A-OBD($\delta_2$) in the fresh gradient setting be  $x_t^{fresh}$. Then actions of  algorithm A-OBD($\delta_2$) with ${\cal F}$ in the stale gradient setting are $x_t^{stale} = x_{t-1}^{fresh}$. Thus, the cost for A-OBD($\delta_2$) in the fresh and stale setting for ${\cal F}$ are: $C_{\mathcal{A}}^{fresh} = \sum_{t=1}^{T}f_t(x_{t}) + \sum_{t=1}^{T}|x_{t}-x_{t-1}|$,  and $
			C_{\mathcal{A}}^{stale}  = \sum_{t=1}^{T}f_t(x_{t-1}) + \sum_{t=1}^{T-1}|x_{t}-x_{t-1}|$,	where we have dropped the superscript $fresh$ from  $x_t^{fresh}$ for notational simplicity.
		Thus, $C_{\mathcal{A}}^{stale} - C_{\mathcal{A}}^{fresh} $
		\begin{align*}
			&\le \sum_{t=1}^{T}|f_t(x_{t})-f_t(x_{t-1})| - |x_{T}-x_{T-1}|,\\
			& \stackrel{A\ref{asm:a2}}\le \sum_{t=1}^{T} L| x_{t}-x_{t-1}| - |x_{T}-x_{T-1}| \le L\mathcal{M}_t^{ALG}. 
		\end{align*}
		Thus, we get that 
		\begin{align*}
			C_{\mathcal{A}}^{stale} &\le C_{\mathcal{A}}^{fresh} + L\mathcal{M}_t^{ALG}\\
			& \le 
			\sum_{t=1}^{T}\left(\mathcal{H}_t^{ALG} + \left(L+1\right)\mathcal{M}^{ALG}_t\right).
		\end{align*}
		
	\end{proof}
	%\begin{equation}
	%			%\begin{gathered}
	%				\begin{aligned}
		%					 \nonumber \\
		%					%    C_{OPT} &= \sum_{t=1}^{T}f_t(x_{t}^{OPT}) + \sum_{t=1}^{T}|x_{t}^{OPT}-x_{t-1}^{OPT}| \nonumber \\
		%					C_{\mathcal{A}}^{stale} - C_{\mathcal{A}}^{fresh} &= \sum_{t=1}^{T}(f_t(x_t^{ALG}) - f_t(x_{t-1}^{ALG})) - |x_T^{ALG}-x_{T-1}^{ALG}| \stackrel{A\ref{asm:a2}}{\leq} \sum_{t=1}^{T} L \mathcal{M}_t^{ALG} \nonumber \\
		%					\Rightarrow C_{\mathcal{A}}^{stale} &\leq C_{\mathcal{A}}^{fresh} + \sum_{t=1}^{T} L \mathcal{M}_t^{ALG} = \sum_{t=1}^{T} \left(\mathcal{H}_t^{ALG} + (L+1)\mathcal{M}_t^{ALG}\right) \nonumber
		%				\end{aligned}
	%			%\end{gathered}
	%		\end{equation}
	%		This completes the proof of Lemma \ref{lemma:cost-relation}.
	
	%	\begin{lemma}\label{lemma:opt-delta-stale}
	%		In the stale gradients setting, the optimal value of $\delta$ that minimizes $\max \left \{\delta+L+1+\frac{M}{2},2+\frac{2(L+1)}{\delta}+\frac{M}{2\delta}\right\}$
	%		is given by
	%		\begin{align*}
		%			\delta^*_{stale} =  \frac{1}{4} \sqrt{(M+2L-2)^2-8(M+4L+4)} - \frac{M}{4} - \frac{L}{2} + \frac{1}{2}.
		%		\end{align*}
	%	\end{lemma} 
	%	Proof of lemma \ref{lemma:opt-delta-stale} can be found in appendix \ref{proof:lemma-opt-delta-stale}. Lemma \ref{lemma:opt-delta-stale} provides the foundation for Theorem \ref{Theorem:UB-stale} and its proof.
	
	With this relationship established through Lemma \ref{lemma:cost-relation}, we are ready to proceed with the proof of Theorem \ref{proof:Theorem-3}. Analogous to the fresh setting, using lemmas \ref{lemma:lb-cost} and \ref{lemma:ub-cost}, we derive an upper bound for the term $\mathcal{H}_t+(L+1)\mathcal{M}_t+\Delta \phi_t$ by considering two cases: $\mathcal{H}_t>\mathcal{H}_t^{OPT}$ and $\mathcal{H}_t\leq \mathcal{H}_t^{OPT}$. \\
	\newline
	\textbf{Case 1. $\mathcal{H}_t>\mathcal{H}_t^{OPT}$}
	\begin{equation} \nonumber
	\begin{gathered}
		\begin{aligned}
			\mathcal{H}_t+(L+1)\mathcal{M}_t+\Delta \phi_t &\stackrel{(a)}\leq \left(\delta + \frac{M}{2}\right)\mathcal{M}_t+\left(L+1\right)\mathcal{M}_t+\Delta  \phi_t\\
			&\stackrel{(b)}\leq  \left(\delta + \frac{M}{2}+L+1\right)\mathcal{M}_t - \gamma \mathcal{M}_t + \gamma \mathcal{M}_t^{OPT}\\
			&\stackrel{(c)}\leq \gamma\left(\mathcal{M}_t^{OPT}+\mathcal{H}_t^{OPT}\right) 	
		\end{aligned}
	\end{gathered}
	\end{equation}
	where, (a) follows from Lemma \ref{lemma:ub-cost}, (b) follows from Lemma \ref{lemma:delphi2}, and (c) holds by choosing $\gamma=\delta + \frac{M}{2}+L+1$ and adding an additional $\gamma \mathcal{H}_t^{OPT}$ term.\\
	
	\textbf{Case 2. $\mathcal{H}_t<\mathcal{H}_t^{OPT}$}
	\begin{equation}\nonumber
	\begin{gathered}
		\begin{aligned}
			\mathcal{H}_t+(L+1)\mathcal{M}_t+\Delta \phi_t &\stackrel{(d)}\leq \mathcal{H}_t\left(1+\frac{L+1}{\delta}\right)+\Delta \phi_t\\
			&\stackrel{(e)}\leq \mathcal{H}_t\left(1+\frac{L+1}{\delta}\right)+\gamma \mathcal{M}_t + \gamma \mathcal{M}_t^{OPT}\\
			&\stackrel{(f)}\leq \mathcal{H}_t^{OPT}\left(1+\frac{L+1}{\delta}+\frac{\gamma}{\delta}\right)+\gamma \mathcal{M}_t^{OPT}\\
			&\leq \max \left\{\left(1+\frac{L+1}{\delta}+\frac{\gamma}{\delta}\right),\gamma \right\}\left(\mathcal{H}_t^{OPT}+\mathcal{M}_t^{OPT}\right)
		\end{aligned}
	\end{gathered}
	\end{equation}
	where (d) follows Lemma \ref{lemma:lb-cost}, (e) follows from \eqref{ineq:delphi_gen}, and (f) holds because $\mathcal{H}_t \leq \mathcal{H}_t^{OPT}.$\\
	
	% [XXX: Where is Lemma 4 being used?]
	Combining the bounds derived for both cases, we have $\mathcal{H}_t+(L+1)\mathcal{M}_t+\Delta \phi_t$
	\begin{equation} 
	\begin{gathered}
		\begin{aligned}
			\leq \max \left\{ \delta + L + 1 + \frac{M}{2}, 2 + \frac{2(L+1)}{\delta} + \frac{M}{2\delta} \right\}\left(\mathcal{H}_t^{OPT}+\mathcal{M}_t^{OPT}\right).\label{eq:ub-stale}
		\end{aligned}
	\end{gathered}
	\end{equation}
	Finally, utilizing \eqref{eq:ub-stale} and the fact that \( \sum_{t=1}^T \Delta \phi_t \geq 0 \), we establish an upper bound on the competitive ratio:
	\begin{equation}\nonumber
	\begin{gathered}
		\begin{aligned}
			C_{ALG} &= \sum_{t=1}^T \left(\mathcal{H}_t + \left(L+1\right)\mathcal{M}_t\right) \\
			&\leq \sum_{t=1}^T \left(\mathcal{H}_t+(L+1)\mathcal{M}_t+\Delta \phi_t\right) \\
			&\stackrel{\eqref{eq:ub-stale}}\leq \sum_{t=1}^T \max \left\{ \delta + L + 1 + \frac{M}{2}, 2 + \frac{2(L+1)}{\delta} + \frac{M}{2\delta} \right\} \left(\mathcal{H}_t^{OPT} + \mathcal{M}_t^{OPT}\right) \\
			&= \max \left\{ \delta + L + 1 + \frac{M}{2}, 2 + \frac{2(L+1)}{\delta} + \frac{M}{2\delta} \right\} \sum_{t=1}^T \left(\mathcal{H}_t^{OPT} + \mathcal{M}_t^{OPT}\right) \\
			&= \max \left\{ \delta + L + 1 + \frac{M}{2}, 2 + \frac{2(L+1)}{\delta} + \frac{M}{2\delta} \right\} C_{OPT}.
		\end{aligned}
	\end{gathered}
	\end{equation}
	
	Hence, the competitive ratio is bounded by
	\begin{equation}\nonumber
	\begin{gathered}
		\begin{aligned}
			\mu_{\ALGone(\delta)} \leq \max \left\{ \delta + L + 1 + \frac{M}{2}, 2 + \frac{2(L+1)}{\delta} + \frac{M}{2\delta} \right\}.
		\end{aligned}
	\end{gathered}
	\end{equation}
	We can now obtain an optimal choice of  $\delta$  to minimize the derived upper bound. Choosing
	\begin{equation}\nonumber
	\begin{gathered}
		\begin{aligned}
			{\tilde \delta} =  \frac{1}{4} \sqrt{(M+2L-2)^2-8(M+4L+4)} - \frac{M}{4} - \frac{L}{2} + \frac{1}{2} 
		\end{aligned}
	\end{gathered}
	\end{equation}
	gives us the following result. 
	\begin{equation}\nonumber
	\begin{gathered}
		\begin{aligned}
			\mu_{\ALGone({\tilde \delta})} \leq \frac{M}{4}+ \frac{L}{2} +\frac{3}{2} +\frac{1}{4} \sqrt{(M+2L-2)^2-8(M+4L+4)}.
		\end{aligned}
	\end{gathered}
	\end{equation}
	This completes the proof of Theorem \ref{Theorem:UB-stale}.
	
	\section{Rationale behind A\ref{asm:a4}} \label{sec:a4}
    Assumption \ref{asm:a4} requires that the noise tolerance $\alpha$ is strictly less than the magnitude of the true gradient, i.e., $\alpha < |\nabla_t|$. This condition ensures that $\nabla_t^{avl}$ reflects the correct descent direction for $f_t$. If $\alpha$ were to exceed $|\nabla_t|$, the noise could be large enough to reverse the intended update direction. In such a case, the noisy gradient could point away from the optimum causing the algorithm to update in the opposite direction and increase \ref{eq:total-cost}. By imposing $\alpha < |\nabla_t|$, we ensure that the deviation introduced by the noise is bounded, so that even in extreme scenario, $\nabla_t^{avl}$ remains aligned with $\nabla_t$. This guarantees that the algorithm consistently moves in a direction of minimizing \ref{eq:total-cost}.
	
	\section{Proof of Theorem~\ref{Theorem:UB-noisy} }\label{proof:Theorem-4} 
	Theorem \ref{Theorem:UB-noisy} establishes an upper bound on the competitive ratio for \ALGone ($\delta_t$) when the gradient information($\nabla_t^{avl}$) is noisy. We cannot use the same method as Appendix~\ref{proof:theorem-1} because Lemma~\ref{lemma:lb-cost}, \ref{lemma:ub-cost} do not hold necessarily. Hence, we derive new bounds $\hat{a} \mathcal{M}_t \leq \mathcal{H}_t \leq \hat{b} \mathcal{M}_t$ for some $\hat{a},\hat{b}>0.$ Our approach is to use extremal cases ($\nabla=\nabla_t - \alpha, \nabla_t + \alpha$) to obtain the values of $\hat{a},\hat{b}$ for any noisy gradient such that $|\nabla_t^{avl}-\nabla_{t}|\leq \alpha.$

	%[XXX: why are you writing Algorithm definition again as a subroutine. We wont have that much space. Use the definition of Algorithm]
	%\hs{The way we have defined the algorithm, it directly outputs using the function and gradient information. So need to define this kind of subroutine.}
	\begin{algorithm}[H]  
	\caption{Subroutine({$f^{\text{val}}, \nabla, \hat{x}, \delta$})} \label{alg:subroutine-AOBD}
	\begin{algorithmic}[1]
		\Function{Subroutine}{$f^{\text{val}}, \nabla, \hat{x}, \delta$}
		\State $L_1(x) \gets f^{\text{val}} + \nabla \cdot (x - \hat{x})$ 
		\State $L_2(x) \gets \delta \cdot \text{sign}(-\nabla) \cdot (x - \hat{x})$ 
		\State Find $x$ such that $L_1(x) = L_2(x)$ 
		\State \Return $x$ 
		\EndFunction
	\end{algorithmic}
	\end{algorithm}
	Let $x_t^{true}$ be the output of Subroutine({$f_t^{\text{avl}}, \nabla_t, x_{t-1}, \delta_t$}) with the choice of $\delta_t$ specified in the statement of Theorem~\ref{Theorem:UB-noisy}.
	
	Define $\underline{\nabla_t} := \nabla_t - \alpha, \overline{\nabla}_t := \nabla_t + \alpha$. Let $\underline{x_t}, \overline{x_t}$ be the output of Subroutine({$f_t^{\text{avl}}, \nabla, x_{t-1}, \delta_t$}) with $\nabla=\underline{\nabla_t},\overline{\nabla}_t$ as the input respectively with same $\delta_t$ as specified in Theorem~\ref{Theorem:UB-noisy}. We similarly define hitting and switching cost associated with $\underline{x_t},\overline{x_t}$ as $(\underline{\mathcal{H}_t}, \underline{\mathcal{M}_t})$ and $(\overline{\mathcal{H}_t}, \overline{\mathcal{M}_t})$ respectively.
	
	We will take $\sign(\nabla_{t})<0$ (similar proof can be done for $\sign(\nabla_{t})>0$). The sign of $\nabla_t$ determines the direction of movement of $x_t$ relative to $x_{t-1}$. Specifically $x_{t-1} \leq x_t \leq x_t^*$ when  $\nabla_t < 0,$ and $x_{t-1} \geq x_t \geq x_t^*$ when $\nabla_t > 0.$ For simplicity, we take $\nabla_t = |\nabla_t|$ and omit the absolute value notation throughout the analysis.

    From the above definitions, the following is immediate
	\begin{align}
	\underline{\mathcal{M}_t} &\leq \mathcal{M}_t \leq \overline{\mathcal{M}_t}, \label{ineq:UB-LB-Mt} \\
	\overline{\mathcal{H}_t}  &\leq \mathcal{H}_t \leq  \underline{\mathcal{H}_t}. \label{ineq:UB-LB-Ht}
	\end{align}
	%	Suppose there exist $a,b>0$ such that $\underline{\mathcal{H}_t} \leq b\underline{\mathcal{M}_t}$ and $\overline{\mathcal{H}_t} \geq a\overline{\mathcal{M}_t}$, we get the following bounds on $\mathcal{H}_t$.
	%	$$a\overline{\mathcal{M}_t} \leq \mathcal{H}_t \leq  b\underline{\mathcal{M}_t} \\ \Rightarrow a \mathcal{M}_t \leq \mathcal{H}_t \leq b \mathcal{M}_t.$$
	%	
	%	Our next step is to determine the values of $a$ and $b$.\\ From \eqref{eq:ulbhtmt}, we get
	%	%As the $\nabla_t$ also lies in between $(\underline{\nabla},\overline{\nabla})$ we have the following.
	%	\begin{equation}\nonumber
	%		\begin{gathered}
		%			\begin{aligned}
			%				%\underline{\mathcal{M}_t} &\leq \mathcal{M}_t^{true} \leq \overline{\mathcal{M}_t}\\ 
			%				%\overline{\mathcal{H}_t} &\leq \mathcal{H}_t^{true} \leq \underline{\mathcal{H}_t}\\
			%				\overline{\mathcal{H}_t} &\leq \left(\delta + \frac{M}{2}\right)\mathcal{M}_t^{true} \leq \left(\delta + \frac{M}{2}\right)\overline{\mathcal{M}_t}\\
			%				\Rightarrow \underline{\mathcal{H}_t} &\geq \delta \mathcal{M}_t^{true} \geq \delta \underline{\mathcal{M}_t}
			%			\end{aligned}
		%		\end{gathered}
	%	\end{equation}
	We can express the values of $\underline{\mathcal{M}_t},\overline{\mathcal{M}_t}$ using their definition in terms of $\mathcal{M}_t^{true},\nabla_{t},\delta_{t}$ and $\alpha,$ as shown below 
	\begin{equation}
	\overline{\mathcal{M}_t} = \mathcal{M}_t^{true} \frac{|\nabla_t|+\delta_t}{|\nabla_t|-\alpha+\delta_t}
	\label{eq:Mupline}
	\end{equation}
	\begin{equation}
	\underline{\mathcal{M}_t} = \mathcal{M}_t^{true} \frac{|\nabla_t|+\delta_t}{|\nabla_t|+\alpha+\delta_t}.
	\label{eq:Mundline}
	\end{equation}
	% \begin{remark} The sign of $\nabla_t$ determines the direction of movement of $x_t$ relative to $x_{t-1}$. Specifically, $x_t$ moves to the right [XXX: can you not say towards $x^\star$ or something more precise?] if $\nabla_t < 0$ and to the left if $\nabla_t > 0$. In this case we consider $\nabla_t < 0$. For simplicity, we take $\nabla_t = |\nabla_t|$ and omit the absolute value notation throughout the analysis.
	% \end{remark}
	\begin{lemma} \label{lemma:overline-UB}
	Lower bound of $\overline{\mathcal{H}_t}$ in terms of $\overline{\mathcal{M}_t}$ is given by the following
	\begin{equation}
		\overline{\mathcal{H}_t} \geq \left(\delta_t-\frac{\alpha L}{\nabla_t+\delta_t-\alpha}\right)
		\left(\frac{\nabla_t+\delta_t-\alpha}{\nabla_t+\delta_t}\right)\overline{\mathcal{M}_t}
	\end{equation}
	\end{lemma}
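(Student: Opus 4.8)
The plan is to analyze the output $\overline{x_t}$ of Subroutine$(f_t^{\text{avl}}, \overline{\nabla}_t, x_{t-1}, \delta_t)$ directly, and lower bound the \emph{true} hitting cost $\overline{\mathcal{H}_t} = f_t(\overline{x_t})$ that is actually incurred by moving to that point. Recall $\overline{x_t}$ is the intersection of the line $L_1(x) = f_t^{\text{avl}} + \overline{\nabla}_t(x - x_{t-1})$ (with $\overline{\nabla}_t = \nabla_t + \alpha$, and sign negative) with the line $L_2(x) = \delta_t \sign(-\overline{\nabla}_t)(x - x_{t-1})$. The difficulty, compared to Lemma~\ref{lemma:lb-cost}, is that $L_1$ is built from the \emph{perturbed} gradient $\overline{\nabla}_t$, not the true one, so $L_1$ need not lie below $f_t$ and the clean convexity argument fails. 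The idea is to compare $L_1$ with the \emph{true} first-order line $\tilde{L}_1(x) = f_t^{\text{avl}} + \nabla_t(x - x_{t-1})$, which \emph{does} lie below $f_t$ by convexity. Since $\sign(\nabla_t) < 0$ and $x > x_{t-1}$ along the descent direction, and $|\overline{\nabla}_t| = \nabla_t + \alpha > \nabla_t = |\nabla_t|$, the perturbed line $L_1$ lies below $\tilde{L}_1$ on this side, so $f_t(\overline{x_t}) \ge \tilde{L}_1(\overline{x_t}) = L_1(\overline{x_t}) + \alpha(\overline{x_t} - x_{t-1}) = \delta_t \overline{\mathcal{M}_t} + \alpha \overline{\mathcal{M}_t}$ — wait, that would already give something stronger; the issue is that $f_t^{\text{avl}} = f_t(x_{t-1})$ is the true value but the \emph{available} value may differ, or rather $\overline{x_t}$ is further from $x_{t-1}$ than $x_t^{true}$, so I must instead bound $f_t^{\text{avl}}$ itself. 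Using $L$-Lipschitzness, $|f_t^{\text{avl}} - (\text{true intercept needed})|$ contributes an $\alpha L/(\nabla_t + \delta_t)$-type correction; this is where the two factors in the claimed bound come from.

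Concretely, the steps I would carry out are: (i) Write $\overline{x_t} - x_{t-1} = \overline{\mathcal{M}_t}$ explicitly from the intersection condition, obtaining $\overline{\mathcal{M}_t} = f_t^{\text{avl}} / (\nabla_t + \alpha + \delta_t)$ (taking $\nabla_t > 0$ as a magnitude per the sign convention in the proof). (ii) Bound $f_t(\overline{x_t})$ from below using convexity via the true tangent line at $x_{t-1}$: $f_t(\overline{x_t}) \ge f_t(x_{t-1}) - \nabla_t \overline{\mathcal{M}_t} = f_t^{\text{avl}} - \nabla_t \overline{\mathcal{M}_t}$. (iii) Substitute $f_t^{\text{avl}} = (\nabla_t + \alpha + \delta_t)\overline{\mathcal{M}_t}$ to get $f_t(\overline{x_t}) \ge (\alpha + \delta_t)\overline{\mathcal{M}_t}$, and then reconcile this with the stated bound — the stated bound $\bigl(\delta_t - \tfrac{\alpha L}{\nabla_t+\delta_t-\alpha}\bigr)\bigl(\tfrac{\nabla_t+\delta_t-\alpha}{\nabla_t+\delta_t}\bigr)\overline{\mathcal{M}_t}$ is actually weaker, so the honest route is to track where $L$ enters: $f_t^{\text{avl}}$ in the Subroutine is $f_{t_{\max}}(x_{t-1})$ which in the noisy setting is the true value, but the \emph{comparison point} against which $\overline{\mathcal{H}_t}$ must be measured relative to $x_t^{true}$ forces a Lipschitz detour, since $\overline{\mathcal{H}_t} = f_t(\overline{x_t})$ and we want it in terms of $\overline{\mathcal{M}_t}$ while the balance was struck at $x_t^{true}$. (iv) Use $L$-Lipschitzness to write $f_t(\overline{x_t}) \ge f_t(x_t^{true}) - L|\overline{x_t} - x_t^{true}| = \mathcal{H}_t^{true} - L(\overline{\mathcal{M}_t} - \mathcal{M}_t^{true})$, combine with $\mathcal{H}_t^{true} \ge \delta_t \mathcal{M}_t^{true}$ (Lemma~\ref{lemma:lb-cost} applied to the true gradient), and use \eqref{eq:Mupline} to express $\mathcal{M}_t^{true} = \overline{\mathcal{M}_t}\tfrac{\nabla_t + \delta_t - \alpha}{\nabla_t + \delta_t}$; collecting terms yields exactly $\overline{\mathcal{H}_t} \ge \bigl(\delta_t - \tfrac{\alpha L}{\nabla_t+\delta_t-\alpha}\bigr)\bigl(\tfrac{\nabla_t+\delta_t-\alpha}{\nabla_t+\delta_t}\bigr)\overline{\mathcal{M}_t}$.

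The main obstacle I anticipate is step (iv): correctly identifying \emph{which} point the Lipschitz estimate should be anchored at and getting the algebra for $\overline{\mathcal{M}_t} - \mathcal{M}_t^{true}$ to telescope into the stated product form. One must be careful that $\overline{x_t} \ge x_t^{true}$ (since the steeper perturbed gradient $\nabla_t + \alpha$ pushes the intersection with $L_2$ outward — actually with a steeper $L_1$ the intersection moves \emph{inward}, so one needs the correct monotonicity, which is precisely what \eqref{ineq:UB-LB-Mt}–\eqref{ineq:UB-LB-Ht} record), and that Assumption~A\ref{asm:a4} ($\alpha < |\nabla_t|$) keeps all denominators such as $\nabla_t + \delta_t - \alpha$ positive so the bound is meaningful and the coefficient $\delta_t - \tfrac{\alpha L}{\nabla_t + \delta_t - \alpha}$ behaves as intended. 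The rest is a direct substitution using \eqref{eq:Mupline} and Lemma~\ref{lemma:lb-cost}.
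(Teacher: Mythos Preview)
Your step (iv) is exactly the paper's proof: apply $L$-Lipschitzness to get $\overline{\mathcal{H}_t} \ge \mathcal{H}_t^{true} - L(\overline{\mathcal{M}_t} - \mathcal{M}_t^{true})$, invoke Lemma~\ref{lemma:lb-cost} for $\mathcal{H}_t^{true} \ge \delta_t \mathcal{M}_t^{true}$, and then use \eqref{eq:Mupline} to rewrite $\mathcal{M}_t^{true}$ and $\overline{\mathcal{M}_t} - \mathcal{M}_t^{true} = \tfrac{\alpha\,\mathcal{M}_t^{true}}{\nabla_t+\delta_t-\alpha}$ in terms of $\overline{\mathcal{M}_t}$. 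The exploratory detours in (i)--(iii) are unnecessary (and contain a sign slip about which of $\overline{x_t}$, $x_t^{true}$ lies farther from $x_{t-1}$, since under the paper's conventions $\overline{\mathcal{M}_t} > \mathcal{M}_t^{true}$ per \eqref{eq:Mupline}), but you correctly identify and execute the actual argument.
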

	\begin{proof}
	Using $L$-Lipschitzness of $f_t$ from  A.\ref{asm:a2} we get
	\begin{equation} \nonumber
		\begin{split}
			|f_t(x_{true})-f_t(\overline{x_t})| &\leq L|x_{true}-\overline{x_t}| \\ 
			\Rightarrow \mathcal{H}_t^{true} - \overline{\mathcal{H}_t} &\stackrel{(a)}\leq L\frac{\alpha \mathcal{M}_t^{true}}{\nabla_t+\delta_t-\alpha}\\
			\Rightarrow \overline{\mathcal{H}_t} &\geq \mathcal{H}_t^{true} - L\frac{\alpha \mathcal{M}_t^{true}}{\nabla_t+\delta_t-\alpha} \\
			&\stackrel{(b)}\geq \delta_t \mathcal{M}_t^{true} - \frac{\alpha L \mathcal{M}_t^{true}}{\nabla_t+\delta_t-\alpha} \\
			&\stackrel{\ref{eq:Mupline}}\geq \left(\delta_t-\frac{\alpha L}{\nabla_t+\delta_t-\alpha}\right)
			\left(\frac{\nabla_t+\delta_t-\alpha}{\nabla_t+\delta_t}\right)\overline{\mathcal{M}_t} 
		\end{split} 
	\end{equation}
	where (a) follows from \eqref{eq:Mupline} since $|x_t^{true}-\overline{x_t}|=\overline{\mathcal{M}_t}-\mathcal{M}_t^{true}$ and (b) follows from Lemma \ref{lemma:lb-cost}.
	\end{proof}
	
	\begin{lemma} \label{lemma:underline-LB}
	Lower bound of $\underline{\mathcal{H}_t}$ in terms of $\underline{\mathcal{M}_t}$ is given by the following
	\begin{equation*}
		\underline{\mathcal{H}_t} \leq \left(\delta_t+ \frac{M}{2}+\frac{\alpha L}{\nabla_t+\delta_t+\alpha}\right) \left(\frac{\nabla_t+\delta_t+\alpha}{\nabla_t+\delta_t}\right)\underline{\mathcal{M}_t}.
	\end{equation*}
	\end{lemma}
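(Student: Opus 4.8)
The plan is to mirror the proof of Lemma~\ref{lemma:overline-UB}, but using the $M$-smoothness upper bound in place of the convexity lower bound, and using the underestimated gradient $\underline{\nabla_t}=\nabla_t-\alpha$ instead of the overestimated one. First I would relate $\mathcal{H}_t^{true}$ and $\underline{\mathcal{H}_t}$ via the $L$-Lipschitz property of $f_t$ (Assumption~A\ref{asm:a2}): since $\underline{x_t}$ and $x_t^{true}$ are the two intersection points for the same $\delta_t$ but gradient values differing by $\alpha$, we have $|x_t^{true}-\underline{x_t}| = \mathcal{M}_t^{true}-\underline{\mathcal{M}_t}$, which by \eqref{eq:Mundline} equals $\frac{\alpha \mathcal{M}_t^{true}}{\nabla_t+\delta_t+\alpha}$. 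Hence $\underline{\mathcal{H}_t} \le \mathcal{H}_t^{true} + \frac{\alpha L\,\mathcal{M}_t^{true}}{\nabla_t+\delta_t+\alpha}$.

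Next I would bound $\mathcal{H}_t^{true}$ from above. Here the analogue of Lemma~\ref{lemma:ub-cost} applies to the subroutine run with the \emph{true} gradient $\nabla_t$ at $x_{t-1}$: by $M$-smoothness, $f_t(x_t^{true}) - \{f_t(x_{t-1}) + \nabla_t (x_t^{true}-x_{t-1})\} \le \frac{M}{2}|x_t^{true}-x_{t-1}|^2 \le \frac{M}{2}\mathcal{M}_t^{true}$, and the bracketed term is exactly $\delta_t \mathcal{M}_t^{true}$ (the value of $L_1$ at the intersection with $L_2$), so $\mathcal{H}_t^{true} \le (\delta_t + \frac{M}{2})\mathcal{M}_t^{true}$. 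Combining, $\underline{\mathcal{H}_t} \le \left(\delta_t + \frac{M}{2} + \frac{\alpha L}{\nabla_t+\delta_t+\alpha}\right)\mathcal{M}_t^{true}$.

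Finally I would convert $\mathcal{M}_t^{true}$ into $\underline{\mathcal{M}_t}$ using \eqref{eq:Mundline}, i.e. $\mathcal{M}_t^{true} = \underline{\mathcal{M}_t}\cdot \frac{\nabla_t+\delta_t+\alpha}{\nabla_t+\delta_t}$, which yields exactly the claimed inequality
\[
\underline{\mathcal{H}_t} \le \left(\delta_t + \frac{M}{2} + \frac{\alpha L}{\nabla_t+\delta_t+\alpha}\right)\left(\frac{\nabla_t+\delta_t+\alpha}{\nabla_t+\delta_t}\right)\underline{\mathcal{M}_t}.
\]
The main obstacle I anticipate is bookkeeping the geometry correctly: verifying that the displacement between the true and underestimated intersection points is precisely $\mathcal{M}_t^{true}-\underline{\mathcal{M}_t}$ (so that Lipschitzness gives the stated bound with the right sign), and checking that the direction-of-movement conventions (we took $\sign(\nabla_t)<0$, so $x_{t-1}\le \underline{x_t}\le x_t^{true}$ because a smaller-magnitude gradient pushes the intersection further from $x_{t-1}$) make the ordering \eqref{ineq:UB-LB-Mt}–\eqref{ineq:UB-LB-Ht} consistent. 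Assumption~A\ref{asm:a4} ($\alpha<|\nabla_t|$) is what guarantees $\underline{\nabla_t}$ still has the correct sign, so the subroutine output stays on the same side of $x_{t-1}$ and all the denominators $\nabla_t+\delta_t\pm\alpha$ remain positive.
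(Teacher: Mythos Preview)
Your proposal is correct and follows essentially the same route as the paper: apply $L$-Lipschitzness to bound $\underline{\mathcal{H}_t}-\mathcal{H}_t^{true}$ via $|x_t^{true}-\underline{x_t}|=\mathcal{M}_t^{true}-\underline{\mathcal{M}_t}$ computed from \eqref{eq:Mundline}, then invoke Lemma~\ref{lemma:ub-cost} (the $M$-smoothness bound) on the true-gradient subroutine to get $\mathcal{H}_t^{true}\le(\delta_t+M/2)\mathcal{M}_t^{true}$, and finally substitute $\mathcal{M}_t^{true}=\underline{\mathcal{M}_t}\,(\nabla_t+\delta_t+\alpha)/(\nabla_t+\delta_t)$. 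Your additional remarks on the role of A\ref{asm:a4} and the sign bookkeeping are helpful commentary that the paper leaves implicit.
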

	\begin{proof}
	Using $L$-Lipschitzness of $f_t$ from A.\ref{asm:a2} we get $|f_t(x_{true})-f_t(\underline{x_t})| \leq L|x_{true}-\underline{x_t}|$
	\begin{equation} \nonumber
		\begin{gathered}
			\begin{aligned}
				\Rightarrow \underline{\mathcal{H}_t} - \mathcal{H}_t^{true} &\stackrel{(a)}\leq L\frac{\alpha \mathcal{M}_t^{true}}{\nabla_t+\delta_t+\alpha}\\
				\Rightarrow \underline{\mathcal{H}_t} &\leq \mathcal{H}_t^{true} + L\frac{\alpha \mathcal{M}_t^{true}}{\nabla_t+\delta_t+\alpha}\\
				&\stackrel{(b)}\leq \left(\delta_t + \frac{M}{2}\right)\mathcal{M}_t^{true} + \frac{\alpha L \mathcal{M}_t^{true}}{\nabla_t+\delta_t+\alpha}\\
				&\stackrel{(\ref{eq:Mundline})}\leq \left(\delta_t+ \frac{M}{2}+\frac{\alpha L}{\nabla_t+\delta_t+\alpha}\right) \left(\frac{\nabla_t+\delta_t+\alpha}{\nabla_t+\delta_t}\right)\underline{\mathcal{M}_t},
			\end{aligned}
		\end{gathered}
	\end{equation}
	where (a) follows from \eqref{eq:Mundline} because $|x_t^{true}-\overline{x_t}|=\mathcal{M}_t^{true}-\underline{\mathcal{M}_t}$ and (b) follows from Lemma \ref{lemma:ub-cost}.
	\end{proof}
	
	\begin{lemma}\label{lemma:received-LB-noisy}
	Under  A\ref{asm:a2} and A\ref{asm:a4}, $\mathcal{H}_t$ and $\mathcal{M}_t$ satisfies
	\begin{equation}\nonumber
		\begin{gathered}
			\begin{aligned}
				\mathcal{H}_t \geq \left(\delta_t - \frac{\alpha L}{\nabla_t+\delta_{t}-\alpha}\right)\left(\frac{\nabla_t+\delta_{t}-\alpha}{\nabla_t+\delta_{t}}\right)\mathcal{M}_t
			\end{aligned}
		\end{gathered}
	\end{equation}
	\end{lemma}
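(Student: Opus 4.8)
The plan is to obtain the bound by a short chain of inequalities, using the extremal subroutine output $\overline{x_t}$ (the one produced with the worst-case gradient $\overline{\nabla}_t = \nabla_t + \alpha$) as an intermediate object, and reusing Lemma~\ref{lemma:overline-UB} rather than re-deriving anything. Write $c_t := \left(\delta_t-\frac{\alpha L}{\nabla_t+\delta_t-\alpha}\right)\left(\frac{\nabla_t+\delta_t-\alpha}{\nabla_t+\delta_t}\right)$ for the coefficient appearing in the statement.

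First I would invoke \eqref{ineq:UB-LB-Ht} to pass from the hitting cost actually incurred to the pessimistic one, $\mathcal{H}_t \ge \overline{\mathcal{H}_t}$. Then Lemma~\ref{lemma:overline-UB} gives $\overline{\mathcal{H}_t} \ge c_t\,\overline{\mathcal{M}_t}$. Finally I would apply \eqref{ineq:UB-LB-Mt}, namely $\overline{\mathcal{M}_t} \ge \mathcal{M}_t$, to replace $\overline{\mathcal{M}_t}$ by $\mathcal{M}_t$; this last replacement is legitimate precisely when $c_t \ge 0$, so the whole proof reduces to a sign check on $c_t$.

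The sign check is the only subtlety, and it is where A\ref{asm:a3} and A\ref{asm:a4} enter. The second factor of $c_t$ is positive because $\delta_t = G+L-\nabla_t^{avl} \ge L > 0$ (by A\ref{asm:a3}, $\nabla_t^{avl} \le G$) and $\nabla_t - \alpha > 0$ (by A\ref{asm:a4}), so $\nabla_t+\delta_t-\alpha > 0$. For the first factor I would show $\delta_t(\nabla_t+\delta_t-\alpha) > \alpha L$: $L$-Lipschitzness of $f_t$ forces $|\nabla_t| \le L \le \delta_t$, hence A\ref{asm:a4} gives $\alpha < \delta_t$, so $\nabla_t+\delta_t-\alpha > \nabla_t$, and therefore $\delta_t(\nabla_t+\delta_t-\alpha) > \delta_t\nabla_t \ge L\nabla_t > L\alpha$, i.e. $\delta_t - \frac{\alpha L}{\nabla_t+\delta_t-\alpha} > 0$. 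With $c_t \ge 0$ in hand, the three-step chain yields $\mathcal{H}_t \ge c_t \mathcal{M}_t$, which is exactly the claim.

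Since the real work is already contained in Lemma~\ref{lemma:overline-UB} and in the monotonicity relations \eqref{ineq:UB-LB-Mt}--\eqref{ineq:UB-LB-Ht}, I do not expect a genuine obstacle here; this lemma is a bookkeeping step whose purpose is to transfer the worst-case bound back onto the quantities $(\mathcal{H}_t,\mathcal{M}_t)$ that the subsequent potential-function argument (mirroring Appendix~\ref{proof:theorem-1}) will actually manipulate.
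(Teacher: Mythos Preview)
Your proposal is correct and follows the same approach as the paper: invoke $\mathcal{H}_t \ge \overline{\mathcal{H}_t}$ and $\overline{\mathcal{M}_t} \ge \mathcal{M}_t$ from \eqref{ineq:UB-LB-Mt}--\eqref{ineq:UB-LB-Ht} and substitute into Lemma~\ref{lemma:overline-UB}. The paper's proof is the two-line version of yours; your explicit sign check on $c_t$ (which the paper leaves implicit) is a welcome piece of rigor, though note it draws on A\ref{asm:a3} whereas the lemma as stated lists only A\ref{asm:a2} and A\ref{asm:a4}.
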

	\begin{proof} From inequalities \eqref{ineq:UB-LB-Mt} and \eqref{ineq:UB-LB-Ht}, we know that $\overline{\mathcal{M}_t} \geq \mathcal{M}_t$ and $\mathcal{H}_t \geq \overline{\mathcal{H}_t}$. Substituting these bounds into Lemma~\ref{lemma:overline-UB}, we obtain the desired result. 
	\end{proof}
	
	\begin{lemma}\label{lemma:received-UB-noisy}
	Under  A\ref{asm:a2} and A\ref{asm:a4}, $\mathcal{H}_t$ and $\mathcal{M}_t$ satisfies
	\begin{equation}\nonumber
		\begin{gathered}
			\begin{aligned}
				\mathcal{H}_t \leq \left(\delta_t + \frac{M}{2} + \frac{\alpha L}{\nabla_t+\delta_{t}+\alpha}\right)\left(\frac{\nabla_t+\delta_{t}+\alpha}{\nabla_t+\delta_{t}}\right)\mathcal{M}_t.
			\end{aligned}
		\end{gathered}
	\end{equation}
	%where, $\underline{\mathcal{H}_t}$ and $\underline{\mathcal{M}_t}$ are the hitting and movement cost incurred when $\underline{\nabla} = \nabla_t-\alpha$ is received by the algorithm.
	\end{lemma}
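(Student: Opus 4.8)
The plan is to run the argument of Lemma~\ref{lemma:received-LB-noisy} in the opposite direction, now using the extremal point $\underline{x_t}$ obtained by feeding $\underline{\nabla_t}=\nabla_t-\alpha$ to Subroutine~\ref{alg:subroutine-AOBD} in place of $\overline{x_t}$. First I would recall the two sandwich relations already recorded in \eqref{ineq:UB-LB-Mt} and \eqref{ineq:UB-LB-Ht}, namely $\underline{\mathcal{M}_t}\le\mathcal{M}_t$ and $\mathcal{H}_t\le\underline{\mathcal{H}_t}$; here $\underline{\mathcal{M}_t}$ is the smallest movement cost and $\underline{\mathcal{H}_t}$ the largest hitting cost among the perturbed balanced points, which is exactly why they pinch $\mathcal{M}_t$ from below and $\mathcal{H}_t$ from above.

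Granting these, the statement follows by a single chain of inequalities: combining $\mathcal{H}_t\le\underline{\mathcal{H}_t}$ with the bound of Lemma~\ref{lemma:underline-LB} and then enlarging $\underline{\mathcal{M}_t}$ to $\mathcal{M}_t$ gives
\[
\mathcal{H}_t\ \le\ \underline{\mathcal{H}_t}\ \le\ \left(\delta_t+\frac{M}{2}+\frac{\alpha L}{\nabla_t+\delta_t+\alpha}\right)\!\left(\frac{\nabla_t+\delta_t+\alpha}{\nabla_t+\delta_t}\right)\underline{\mathcal{M}_t}\ \le\ \left(\delta_t+\frac{M}{2}+\frac{\alpha L}{\nabla_t+\delta_t+\alpha}\right)\!\left(\frac{\nabla_t+\delta_t+\alpha}{\nabla_t+\delta_t}\right)\mathcal{M}_t,
\]
which is precisely the claimed inequality. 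The last step is the only one needing a moment's care: it is legitimate because the leading coefficient is strictly positive --- $\delta_t>0$, $M\ge 0$, and under A\ref{asm:a4} we have $\nabla_t>\alpha>0$, so both $\nabla_t+\delta_t+\alpha$ and $\nabla_t+\delta_t$ are positive and the ratio $\tfrac{\nabla_t+\delta_t+\alpha}{\nabla_t+\delta_t}>1$.

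Consequently there is no genuine obstacle here, only bookkeeping: one must make sure \eqref{ineq:UB-LB-Mt} and \eqref{ineq:UB-LB-Ht} are invoked in the directions that compose correctly (enlarging $\mathcal{H}_t$ up to $\underline{\mathcal{H}_t}$, and enlarging $\underline{\mathcal{M}_t}$ up to $\mathcal{M}_t$), and observe that Assumption A\ref{asm:a4} is exactly what makes the construction valid, since it guarantees $\underline{\nabla_t}=\nabla_t-\alpha$ still points along the true descent direction, so that $\underline{x_t}$ is a well-defined balanced point and Lemmas~\ref{lemma:lb-cost}--\ref{lemma:ub-cost}, on which Lemma~\ref{lemma:underline-LB} rests, remain applicable. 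Beyond the content already contained in Lemma~\ref{lemma:underline-LB}, no further computation is required.
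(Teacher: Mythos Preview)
Your argument is correct and follows exactly the same approach as the paper's proof: invoke \eqref{ineq:UB-LB-Mt} and \eqref{ineq:UB-LB-Ht} to get $\underline{\mathcal{M}_t}\le\mathcal{M}_t$ and $\mathcal{H}_t\le\underline{\mathcal{H}_t}$, then substitute into Lemma~\ref{lemma:underline-LB}. Your added remark that the coefficient is positive (so that enlarging $\underline{\mathcal{M}_t}$ to $\mathcal{M}_t$ preserves the inequality) is a useful sanity check that the paper leaves implicit.
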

	\begin{proof} From  \eqref{ineq:UB-LB-Mt} and \eqref{ineq:UB-LB-Ht}, we know that $\underline{\mathcal{M}_t} \leq \mathcal{M}_t$ and $\mathcal{H}_t \leq \underline{\mathcal{H}_t}$. Substituting these bounds into Lemma~\ref{lemma:underline-LB}, we obtain the desired result. 
	\end{proof}
	
	In this noisy gradient setting, Lemmas \ref{lemma:received-LB-noisy} and \ref{lemma:received-UB-noisy} are analogues to Lemma \ref{lemma:lb-cost} and \ref{lemma:ub-cost} in the fresh gradient setting respectively. Using  Lemmas \ref{lemma:received-LB-noisy}, \ref{lemma:received-UB-noisy} we can derive our required $a,b$ as described above. However this choice of $a,b$ depends on $\nabla_{t}$ or the true gradient value whose quantity is unknown to the algorithm. We can remove this dependence by using the inequality $\nabla_{t}^{avl}-\alpha \leq \nabla_{t} \leq \nabla_{t}^{avl}+\alpha$ which is shown below. 
	\begin{lemma}\label{lemma:noisy-bounds}
	In the noisy fresh gradient setting, $\mathcal{H}_t$ and $\mathcal{M}_t$ satisfies : $a\mathcal{M}_t \leq \mathcal{H}_t \leq b\mathcal{M}_t$
	% [XXX: is this for underline or overline $M_t, H_t$????]
	where,
	\begin{equation}\nonumber
		\begin{gathered}
			\begin{aligned}
				a &=  \left(\delta_t - \frac{\alpha L}{\nabla_{t}^{avl}+\delta_{t}-2\alpha}\right)\left(1-\frac{\alpha}{\nabla_{t}^{avl}+\delta_{t}-\alpha}\right)\\
				b &= \left(\delta_t + \frac{M}{2} + \frac{\alpha L}{\nabla_{t}^{avl}+\delta_{t}}\right)\left(1+\frac{\alpha}{\nabla_{t}^{avl}+\delta_{t}-\alpha}\right)
			\end{aligned}
		\end{gathered}
	\end{equation}
	\end{lemma}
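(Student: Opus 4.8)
The plan is to begin from Lemmas~\ref{lemma:received-LB-noisy} and~\ref{lemma:received-UB-noisy}, which already express $\mathcal{H}_t$ in terms of $\mathcal{M}_t$ but with coefficients that depend on the \emph{unknown} true gradient $\nabla_t$, and then eliminate that dependence by replacing $\nabla_t$ with the worst admissible value permitted by $|\nabla_t^{avl}-\nabla_t|\le\alpha$, i.e.\ $\nabla_t\in[\nabla_t^{avl}-\alpha,\,\nabla_t^{avl}+\alpha]$. Write the lower-bound coefficient from Lemma~\ref{lemma:received-LB-noisy} as the function
\[
C_\ell(\nabla):=\left(\delta_t-\frac{\alpha L}{\nabla+\delta_t-\alpha}\right)\left(\frac{\nabla+\delta_t-\alpha}{\nabla+\delta_t}\right),
\]
and the upper-bound coefficient from Lemma~\ref{lemma:received-UB-noisy} as
\[
C_u(\nabla):=\left(\delta_t+\frac{M}{2}+\frac{\alpha L}{\nabla+\delta_t+\alpha}\right)\left(\frac{\nabla+\delta_t+\alpha}{\nabla+\delta_t}\right).
\]
The lemma then follows once we show that $C_\ell$ is nondecreasing and $C_u$ is nonincreasing in $\nabla$ over $[\nabla_t^{avl}-\alpha,\,\nabla_t^{avl}+\alpha]$: in that case $\mathcal{H}_t\ge C_\ell(\nabla_t)\mathcal{M}_t\ge C_\ell(\nabla_t^{avl}-\alpha)\mathcal{M}_t$ and $\mathcal{H}_t\le C_u(\nabla_t)\mathcal{M}_t\le C_u(\nabla_t^{avl}-\alpha)\mathcal{M}_t$, and substituting $\nabla=\nabla_t^{avl}-\alpha$ and simplifying (using $\nabla_t^{avl}-\alpha+\delta_t-\alpha=\nabla_t^{avl}+\delta_t-2\alpha$ and $\nabla_t^{avl}-\alpha+\delta_t=\nabla_t^{avl}+\delta_t-\alpha$, and the analogous identities for the $+\alpha$ terms) turns these into exactly $a\mathcal{M}_t$ and $b\mathcal{M}_t$.

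For the monotonicity I would argue factor by factor. In $C_\ell$, as $\nabla$ increases the term $\frac{\alpha L}{\nabla+\delta_t-\alpha}$ decreases, so the first factor $\delta_t-\frac{\alpha L}{\nabla+\delta_t-\alpha}$ increases, while $\frac{\nabla+\delta_t-\alpha}{\nabla+\delta_t}=1-\frac{\alpha}{\nabla+\delta_t}$ also increases; hence $C_\ell$ is a product of two nonnegative nondecreasing functions and is itself nondecreasing. Symmetrically, in $C_u$ both $\delta_t+\frac{M}{2}+\frac{\alpha L}{\nabla+\delta_t+\alpha}$ and $\frac{\nabla+\delta_t+\alpha}{\nabla+\delta_t}=1+\frac{\alpha}{\nabla+\delta_t}$ are nonincreasing and nonnegative, so $C_u$ is nonincreasing. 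The mildly counterintuitive point worth highlighting is that the worst value of $\nabla$ is the \emph{left} endpoint $\nabla_t^{avl}-\alpha$ for \emph{both} bounds, so a larger $|\nabla_t|$ tightens the lower and the upper estimate at the same time.

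The one place that needs care, and the main obstacle, is the nonnegativity hypothesis used in the ``product of monotone functions'' step, in particular for the first factor of $C_\ell$. Its minimum over the interval is $\delta_t-\frac{\alpha L}{\nabla_t^{avl}+\delta_t-2\alpha}$, which is exactly the first factor of $a$. I would therefore first record that, under Assumptions~\ref{asm:a3} and~\ref{asm:a4} together with the prescribed choice $\delta_t=G+L-\nabla_t^{avl}$ --- which gives $\nabla_t^{avl}+\delta_t=G+L$ and $\delta_t\ge L$ --- this quantity is strictly positive provided $\alpha$ is small enough that $G+L-2\alpha>0$; this is precisely the condition that makes $\hat a$ in Theorem~\ref{Theorem:UB-noisy} well defined, so it is already implicitly in force. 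Granting it, every factor appearing in $C_\ell$ and $C_u$ is positive throughout $[\nabla_t^{avl}-\alpha,\,\nabla_t^{avl}+\alpha]$, the factor-by-factor monotonicity argument goes through, and the lemma follows; moreover $a\le b$ is then automatic by combining \eqref{ineq:UB-LB-Mt} and \eqref{ineq:UB-LB-Ht}.
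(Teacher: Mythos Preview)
Your proposal is correct and follows essentially the same route as the paper: start from Lemmas~\ref{lemma:received-LB-noisy} and~\ref{lemma:received-UB-noisy} and replace the unknown $\nabla_t$ by its worst admissible value $\nabla_t^{avl}-\alpha$ in every denominator. The paper does this substitution in one line without justification, whereas you spell out the factor-by-factor monotonicity and the positivity hypothesis needed for the product argument; your version is the more careful of the two.
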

	
	% Note that $a,b$ are not constant terms, rather are dependent on $\delta_t$ and $\nabla_t^{avl}$. We observe that the $\nabla_{t}^{avl}$ in the denominator of $a,b$ always appears in additive relation with $\delta_{t}.$ If we choose $\nabla_{t}+\delta_{t}=\beta$ where $\beta$ is a constant, we can remove the dependence of $\nabla_t$ in the denominator. For the $\delta_{t}$ terms in numerator which can be written as $\beta-\nabla_{t}^{avl},$ we can use assumption~\ref{asm:a3}, \ref{asm:a4} to prove our Theorem~\ref{Theorem:UB-stale}.  
	% \hs{Will finish the below part afterwards.}
    \begin{proof}
    Combining Lemma~\ref{lemma:received-LB-noisy},\ref{lemma:received-UB-noisy} we obtain
	\begin{equation} 
    \left(\delta_t - \frac{\alpha L} {\nabla_t+\delta_t-\alpha}\right) \left(1-\frac{\alpha}{\nabla_t+\delta_t}\right)\mathcal{M}_t 
    \leq \mathcal{H}_t \nonumber \\ \leq 
    \left(\delta_t+ \frac{M}{2}+\frac{\alpha L}{\nabla_t +\delta_t+\alpha}\right) \left(1+\frac{\alpha}{\nabla_t+\delta_t}\right)\mathcal{M}_t \label{eq:noisy_grad}
\end{equation}
	Using $\nabla_t^{avl}-\alpha \leq \nabla_t \leq \nabla_t^{avl}+\alpha$ we can write \eqref{eq:noisy_grad} as
	\begin{equation}
				\left(\delta_t-\frac{\alpha L} {\nabla_t^{avl}+\delta_t-2\alpha}\right) \left(1 - \frac{\alpha}{\nabla_t^{avl}+\delta_t-\alpha}\right)\mathcal{M}_t \leq \mathcal{H}_t \nonumber \\ \leq \left(\delta_t+ \frac{M}{2}+\frac{\alpha L}{\nabla_t^{avl} +\delta_t}\right) \left(1+\frac{\alpha} {\nabla_t^{avl}+\delta_t-\alpha}\right)\mathcal{M}_t
				\label{eq:noisy_grad1}
	\end{equation}
    This completes the proof of Lemma~\ref{lemma:noisy-bounds}
    \end{proof}
	%	\hs{Write the above as a theorem}\\
\vspace{-0.15in}
    \begin{remark}[Choice of $\delta_t$]\label{remark:beta}
		We note that constants $a,b$ depend on both $\delta_t$ and $\nabla_t^{avl}$. In particular, the term $\nabla_t^{avl}$ appears additively with $\delta_t$ in the denominators of $a,b$. This is problematic as $\nabla_t^{avl}$ is controlled by the adversary, making any bounds depending on it non-robust. To eliminate this dependence, we choose $\delta_t$ such that $\nabla_{t}+\delta_{t}=\beta$ ($\beta$ is a constant), equivalently we set $\delta_t = G +L  - \nabla_t^{avl}$. Although this substitution induces $\delta_t$-dependent terms in the numerator of $a,b$. We can simplify these constants $a,b$ further by writing $\delta_t$ as $\beta - \nabla_t^{avl}$ and use assumptions A \ref{asm:a3} and A \ref{asm:a4} to obtain simplified bounds for $\mathcal{H}_t$.
	\end{remark}
	
    Combining the Lemma~\ref{lemma:noisy-bounds} with above mentioned choice of $\delta_t$ we obtain these new bounds  
	\begin{equation}
    \hat{a}\mathcal{M}_t \leq \mathcal{H}_t \leq \hat{b}\mathcal{M}_t \label{eq:new-ab} \\
		\hat{a} =	\left(L-\frac{\alpha L} {G+L-2\alpha}\right) \left(1-\frac{\alpha}{G+L-\alpha}\right) \nonumber\\
            \hat{b} =  \left(G+L+ \frac{M}{2}+\frac{\alpha L}{G+L}\right) \left(1+\frac{\alpha}{G+L-\alpha}\right) \nonumber
	\end{equation}

    Next, we use the same potential function method as in previous settings, with the key modification being the use of bounds from \eqref{eq:new-ab} that account for gradient noise. By employing this method, we establish that an upper bound on competitive ratio is given by
	$\mu_{\text{A-OBD}(\delta_t)} \leq \max\left\{1+\frac{2+\hat{b}}{\hat{a}}, 1+\hat{b}\right\},$
    where the constants $\hat{a}$ and $\hat{b}$ are defined as in \eqref{eq:new-ab}. This completes the proof of Theorem \ref{Theorem:UB-noisy}.
\vspace{-0.14in}
	\bibliographystyle{IEEEtran} %ACM-Reference-Format}
        \bibliography{IEEE-SingleGradient-OCO}
\end{document}